\newtheorem{thm}{Theorem}[section]
\newtheorem{defi}[thm]{Definition}
\newtheorem{cor}[thm]{Corollary}
\newtheorem{lem}[thm]{Lemma}
\newtheorem{prop}[thm]{Proposition}
\newtheorem{rem}[thm]{Remark}
\newtheorem{assm}[thm]{Assumption}
\def\<{\langle}    \def\>{\rangle}
\def\lp{\left(}  \def\rp{\right)}
\newcommand{\ve}{(V,E)}
\newcommand{\g}{(V,\omega,\mu)}
\newcommand{\lap}{\frac{1}{\mu(x)}\sum_{y\in V}\omega(x,y) }
\newcommand{\xt}{\lp \mathcal{X}_t\rp_{t\geq0} }
\def\NN{\mathbb{N}_{+}}
\def\RR{\mathbb{R}}
\def\ddd{\mathscr{D}}
\def\lll{\mathscr{L}}
\def\eee{\mathscr{E}}
\def\fff{\mathscr{F}}
\author[X. Huang]{Xueping Huang}
\address{Department of Mathematics\\ University of Bielefeld, 33501
Bielefeld , Germany}
\thanks{Research supported by Project CRC701}
\email{xhuang1@math.uni-bielefeld.de}
\title[Volume growth]{A note on the volume growth criterion for stochastic completeness of weighted graphs}
\def\func#1{\mathop{\rm #1}\nolimits}%
\def\FRAME#1#2#3#4#5#6#7#8
\begin{document}
\numberwithin{equation}{section}

\begin{abstract}
We generalize the weak Omori-Yau maximum principle to the setting of strongly local Dirichlet forms. 
As an application, we obtain an analytic approach to compare the stochastic completeness of a weighted graph with that of an associated metric graph.
This comparison result played an essential role in the volume growth criterion of Folz \cite{FOLZSC}, who first proved it via a probabilistic approach. We also give an alternative analytic proof based on a criterion in Fukushima, Oshima, and Takeda \cite{FOT}. 
\end{abstract}
\keywords{stochastic completeness, weighted graphs, metric graphs, weak Omori-Yau maximum principle, strongly local Dirichlet spaces}
\subjclass[2010]{Primary {05C81}, Secondary {60J27}}

\date{\today}

\maketitle
\section*{Introduction and settings} 

In this article we develop two analytic approaches to relate the stochastic completeness (or conservativeness) of a weighted graph to that of an associated metric graph. These two approaches lead to analytic proofs of the volume growth criterion for weighted graphs recently obtained by Folz \cite{FOLZSC}. Both approaches involve only some simple elementary calculations.


 Our first approach is based on comparison of the existence of certain functions on a weighted graph and on a metric graph, via the weak Omori-Yau maximum principle in both settings. The main idea behind this approach is that the Dirichlet to Neumann problem for intervals can be used to relate a certain system of difference equations (inequalities) to a system of differential equations (inequalities). 

Our second approach is based on a useful general criterion of stochastic completeness (c.f. \cite{FOT}, Theorem 1.6.6). This approach involves several neat estimates and leads to an even simpler proof.

\subsection{Dirichlet forms}
The theory of Dirichlet forms offers a convenient common framework for diffusion type processes on metric graphs and jump type processes on weighted graphs, as required in our approaches. Let us first recall some basic facts and fix some notations, generally following the book of Fukushima et al. \cite{FOT}.

Without further specification, $\lp X, d \rp$ denotes a separable metric space such that all closed balls of the form $B_d(x,r)=\{y\in X: d(x, y)\le r\}$ are compact. Let $\mu$ be a Radon measure on $X$ with full support (i.e. $\func{supp}\mu=X$). Let
$\lp \eee, \fff \rp$ be a symmetric, regular Dirichlet form on the real Hilbert space $L^2 \lp X, \mu\rp$. Here $\fff$ is a dense subspace of
$\lp L^2 ( X, \mu), \<\cdot, \cdot\> \rp$, and is itself a Hilbert space with respect to the inner product $\eee_1 (\cdot,\cdot)=\eee(\cdot,\cdot) +\<\cdot, \cdot\>$. The regularity of $\lp \eee, \fff \rp$ means that $\fff\cap C_c(X)$ is dense in $\fff$ with $\eee_1$-norm and dense in $C_c(X)$ with uniform norm, where $C_c(X)$ is the space of compactly supported continuous functions on $\lp X, d\rp$. We denote $\fff_b =\fff\cap L^{\infty}(X, \mu)$, which forms an algebra. We will also make use of the spaces $\fff_c$, that is, the subspace of compactly supported functions in $\fff$, and $\fff_{b,c}=\fff_b\cap \fff_c$. 

It is a classical result of Beurling and Deny (see Theorem 3.2.1 in \cite{FOT}) that a general regular Dirichlet form allows a decomposition as
\begin{equation}
  \begin{aligned}
&\eee (u, v)=\eee^{(c)}(u,v)\\
&+\int_{X\times X-\func{diag}}\lp u(x)-u(y)\rp\lp v(x)-v(y)\rp J(dx, dy)+\int_X u(x)v(x)k(dx),
\end{aligned}
\end{equation}
for $u, v\in \fff\cap C_c(X)$. Here $\eee^{(c)}$ is the strongly local part of $\eee$, which satisfies that $\eee^{(c)}(u,v)=0$, for any pair of $u, v\in \fff\cap C_c(X)$ with $v$ being constant on a neighborhood of $\func{supp} u$. The integral with respect to the symmetric Radon measure $J$ on $X\times X$ off the diagonal is called the jump part of $\eee$. The last integral with $k$ a Radon measure on $X$ is the killing part. We always assume a vanishing killing part in this article.

There is a unique Hunt process $\lp \xt, (\mathbb{P}_x)_{x\in X}\rp$ (roughly speaking, a strong Markov process with all expected ``nice" properties) associated with a regular Dirichlet form. If a regular Dirichlet form has only the strongly local part non-vanishing, it is called strongly local and the corresponding process is a diffusion. For example, the Brownian motion on Euclidean spaces or Riemannian manifolds falls into this category. If the jump part is the only non-trivial part, the corresponding process is of pure jump type. Typical examples are $\alpha$-stable processes on Euclidean spaces and continuous time, reversible Markov chains with discrete state spaces.

There is a unique non-negative definite, self-adjoint operator $\lp \lll, \ddd \rp$, called the generator, naturally associated with $\lp \eee, \fff \rp$ in the following way (Corollary 1.3.1 in \cite{FOT}): 
\[\ddd\subseteq \fff; \eee(u, v)=\<\lll u, v\>, \forall u\in \ddd, v\in \fff.\]
Note that here we use the convention of signs that $\lll$ is non-negative definite, opposite to many authors.
Through functional calculus, we can define the heat semigroup $\lp P_t\rp_{t>0}$ on $L^2 ( X, \mu)$ as $P_t= \exp(-t\lll)$.
This is a Markovian semigroup and thus can be extended to $L^{\infty}(X, \mu)$ in a canonical way (see page 49 of \cite{FOT} for details).

\begin{defi}
  [Stochastic completeness] A Dirichlet form $\lp \eee, \fff \rp$ is called stochastically complete if the corresponding heat semigroup $\lp P_t\rp_{t>0}$ satisfies that
  \begin{equation}
    \label{eq-sc-defi}
    P_t \boldsymbol{1} =\boldsymbol{1} ~~~~ \mu\text{-a.e.~~}, \forall t>0,
  \end{equation}
  where $\boldsymbol{1}$ denotes the constant function taking value $1$. Otherwise, the Dirichlet form is called stochastically incomplete.
\end{defi}
 Sometimes we also talk about the stochastic completenss/incompletenss of the space $X$ when the Dirichlet form on it that we refer to is clear.
The following general criterion for stochastic completeness criterion will be applied in our second approach in Section \ref{sect-another-proof}.
\begin{thm}[Fukushima et al. \cite{FOT}]\label{thm-fot}
Let $(\eee, \fff)$ be a Dirichlet form on a $\sigma$-finite measure space $(X, d, \mu)$. Then it is stochastically complete if and only if there exists a sequence $\{v_n\}\subset \fff$ satisfying
\[0\le v_n\le 1, \lim_{n\rightarrow \infty}v_n =1 ~~~~\mu\text{-a.e.}\] 
such that
\[\lim_{n\rightarrow \infty}\eee(v_n, w)=0\]
holds for any $w\in \fff\cap L^1(X, \mu)$.
\end{thm}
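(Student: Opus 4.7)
The plan is to reformulate stochastic completeness in terms of the resolvent $G_\alpha := (\alpha+\lll)^{-1}$, and then test the resolvent identity against the sequence $\{v_n\}$. Via the Laplace representation $\alpha G_\alpha \boldsymbol{1} = \int_0^\infty \alpha e^{-\alpha t} P_t \boldsymbol{1}\, dt$ and Markovianity $0\le P_t\boldsymbol{1}\le \boldsymbol{1}$, the form $(\eee,\fff)$ is stochastically complete if and only if $\alpha G_\alpha \boldsymbol{1}=\boldsymbol{1}$ $\mu$-a.e.\ for some (equivalently every) $\alpha>0$, where $G_\alpha$ is understood via its canonical Markovian extension from $L^2$ to $L^\infty$. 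The key tool is then the $L^2$-resolvent identity
\[\eee(G_\alpha f, v) + \alpha \langle G_\alpha f, v\rangle = \langle f, v\rangle \qquad (f\in L^2(X,\mu),\ v\in \fff),\]
together with the fact that $G_\alpha$ is an $L^1$-contraction.

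For sufficiency, fix $\alpha>0$ and $f\in L^1(X,\mu)\cap L^2(X,\mu)$ with $f\ge 0$. Then $w:=G_\alpha f\in \fff\cap L^1(X,\mu)$ is an admissible test function, and setting $v=v_n$ above yields
\[\eee(v_n,w) = \langle f, v_n\rangle - \alpha\langle G_\alpha f, v_n\rangle.\]
The hypothesis on $\{v_n\}$ kills the left-hand side as $n\to\infty$, while dominated convergence (using $|v_n|\le 1$, $f\in L^1$, $G_\alpha f\in L^1$) gives $\int f\,d\mu = \alpha\int G_\alpha f\,d\mu$. Promoting the $L^2$-symmetry of $G_\alpha$ to a duality with $L^\infty$ by monotone approximation rewrites this as $\int f\,(\boldsymbol{1}-\alpha G_\alpha \boldsymbol{1})\,d\mu=0$. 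Since $\alpha G_\alpha \boldsymbol{1}\le \boldsymbol{1}$ and $f\ge 0$ may be chosen to exhaust any set of finite measure (using $\sigma$-finiteness), this forces $\alpha G_\alpha \boldsymbol{1}=\boldsymbol{1}$ a.e., i.e.\ stochastic completeness.

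For necessity, pick $\phi_n\in L^1(X,\mu)\cap L^2(X,\mu)$ with $0\le \phi_n\nearrow \boldsymbol{1}$, fix any $\alpha>0$, and set $v_n:=\alpha G_\alpha \phi_n\in \fff$. Markovianity combined with the equality $\alpha G_\alpha \boldsymbol{1}=\boldsymbol{1}$ and monotone convergence give $0\le v_n\le \boldsymbol{1}$ and $v_n\to \boldsymbol{1}$ a.e. For $w\in \fff\cap L^1(X,\mu)$, the resolvent identity collapses to
\[\eee(v_n,w) = \alpha\langle \phi_n-v_n,\, w\rangle,\]
which tends to $0$ by dominated convergence (using $|\phi_n-v_n|\le 2$ and $w\in L^1$).

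The main obstacle is conceptual rather than computational: $\boldsymbol{1}\notin L^2$ in general, so the $L^2$-symmetry of $G_\alpha$, the $L^1$-contractivity, and the $L^\infty$-Markovian extension must be assembled into a coherent duality before the sufficiency argument closes up. Once this infrastructure is in place, both directions reduce to plugging the right test function into the resolvent identity and invoking Lebesgue's theorem.
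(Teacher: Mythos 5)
Your argument is correct: both directions are the standard resolvent proof of this criterion, and the paper itself does not prove the statement at all --- it imports it verbatim as Theorem 1.6.6 of Fukushima--Oshima--Takeda, so your proposal essentially reconstructs the proof from that source (test $w=G_\alpha f$ in the identity $\eee(G_\alpha f,v)+\alpha\<G_\alpha f,v\>=\<f,v\>$ for sufficiency; take $v_n=\alpha G_\alpha\phi_n$ with $\phi_n\nearrow\boldsymbol{1}$ for necessity). The only steps you gloss --- the equivalence of $P_t\boldsymbol{1}=\boldsymbol{1}$ for all $t>0$ with $\alpha G_\alpha\boldsymbol{1}=\boldsymbol{1}$ for some $\alpha>0$, and the $L^1$-contractivity and monotone $L^\infty$-extension of $G_\alpha$ --- are standard facts for sub-Markovian resolvents and do not constitute a gap.
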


In the probabilistic language, the heat semigroup $\lp P_t\rp_{t>0}$ corresponds to the transition semigroup of the corresponding Hunt process in the way that for any bounded Borel function $u$ on $X$,
  \[P_t u(x)= \mathbb{E}_x [u(\mathcal{X}_t)],~~~~ \mu\text{-a.e.~~~~} x, \forall t>0.\]
  Stochastic completeness has an intuitive interpretation that the Hunt process has infinite lifetime almost surely. We refer the reader to \cite{FOT} for details. The condition of stochastic completeness is useful in heat kernel estimates. See for example \cite{GriHu-inventiones}.

There is a large body of literature devoted to different types of criteria of stochastic completeness in various settings. In the following, we list several settings where this problem has been studied and make a brief summary of known results. In this article, we are mainly concerned with two types of criteria: function theoretic type and volume growth type. The first one relates stochastic completeness to the (non)existence of bounded solutions to certain equations or inequalities, while the second one relates it to the large scale geometry of the underlying metric space $(X, d)$.

\subsection{Riemannian manifolds}
The Brownian motion on a complete, connected, smooth Riemannian manifold $X$ can be naturally formulated in the language of Dirichlet forms. The metric measure space structure $\lp X, d, \mu\rp$ is given by the geodesic metric $d$ and the Riemannian volume $\mu$ on the manifold $X$. The Dirichlet form $\lp \eee, \fff \rp$ is given by (cf. \cite{GRIBOOK})
\[\fff= H^1_0(X),~~~~\eee(u, v)=\int_X \lp \nabla u \cdot \nabla v \rp d\mu, \forall u, v\in  H^1_0(X). \]
The generator $\lll$ is given by certain restriction of the Laplace-Beltrami operator $\Delta$ (non-negative definite sign convention).

One typical function theoretical criterion for stochastic completeness takes the following form.
\begin{thm}[Khas'minskii \cite{Khas}]\label{thm-khas}
  The manifold $X$ with the Dirichlet form $\lp \eee, \fff \rp$ is stochastically incomplete if and only if one of the following holds:
  \begin{enumerate}
    \item  for some/all $\lambda>0$, there is a bounded, nonzero, smooth solution to the equation
  \[\Delta u +\lambda u=0;\]

    \item   for some/all $\lambda>0$, there is a bounded, nonzero, non-negative $C^2$-solution to the inequality
  \[\Delta u +\lambda u\le 0.\]
  \end{enumerate}
\end{thm}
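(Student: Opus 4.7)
The plan is to use the resolvent function
\[
w_\lambda(x)\;:=\;1 - \lambda\, G_\lambda \mathbf{1}(x), \qquad G_\lambda \;=\; (\Delta + \lambda)^{-1} \;=\; \int_0^\infty e^{-\lambda t}P_t\, dt,
\]
as a canonical comparison function. Since $0 \le P_t\mathbf{1} \le \mathbf{1}$ one has $0 \le w_\lambda \le 1$, and combining the resolvent identity $(\Delta+\lambda)G_\lambda\mathbf{1}=\mathbf{1}$ with the pointwise identity $\Delta \mathbf{1} = 0$ gives $\Delta w_\lambda + \lambda w_\lambda = 0$; elliptic regularity then ensures smoothness. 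By the Feynman--Kac formula, $w_\lambda(x) = \mathbb{E}_x[e^{-\lambda \zeta}]$, where $\zeta$ is the explosion time of Brownian motion on $X$, so $w_\lambda \equiv 0$ iff $\zeta = \infty$ almost surely, i.e., iff $X$ is stochastically complete; the same formula also makes the equivalence of ``some $\lambda>0$'' and ``all $\lambda>0$'' immediate. In particular, stochastic incompleteness already yields (1) (take $u = w_\lambda$) and, a fortiori, (2).

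For the two converse implications I would use a classical exhaustion argument based on the weak maximum principle, with $w_\lambda$ as the universal upper comparison function. Fix an exhausting sequence $\Omega_n \uparrow X$ of precompact open sets with smooth boundary and let $w_\lambda^{(n)}$ be the unique classical solution of $(\Delta + \lambda)w = 0$ on $\Omega_n$ with $w = 1$ on $\partial \Omega_n$; equivalently $w_\lambda^{(n)}(x) = \mathbb{E}_x[e^{-\lambda \tau_n}]$, where $\tau_n$ is the exit time from $\Omega_n$. Since $\tau_n \uparrow \zeta$, monotone convergence gives $w_\lambda^{(n)} \downarrow w_\lambda$ pointwise. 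Given now a bounded $C^2$ function $u \ge 0$ with $\Delta u + \lambda u \le 0$, set $M := \sup u$ and consider $F_n := u - M w_\lambda^{(n)}$ on $\Omega_n$. Then $(\Delta+\lambda)F_n \le 0$ in $\Omega_n$ and $F_n \le 0$ on $\partial \Omega_n$; a positive interior maximum would force $\Delta F_n \ge 0$ (in the paper's non-negative Laplacian convention) and $\lambda F_n > 0$, contradicting the inequality, so the weak maximum principle gives $u \le M w_\lambda^{(n)}$ on $\Omega_n$. Letting $n \to \infty$ yields $u \le M w_\lambda$, and hence $u \not\equiv 0$ forces $w_\lambda \not\equiv 0$, i.e., stochastic incompleteness. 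Applying the same comparison separately to $v$ and $-v$ (with $M := \sup|v|$) yields $|v| \le M w_\lambda$ for any bounded classical solution $v$ of $\Delta v + \lambda v = 0$, so (1) likewise forces stochastic incompleteness.

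The only step that requires genuine care is the identification $\lim_n w_\lambda^{(n)} = w_\lambda$. If one prefers to avoid the probabilistic interpretation, one instead writes $w_\lambda^{(n)} = 1 - \lambda G_\lambda^{\Omega_n}\mathbf{1}$ in terms of the Dirichlet resolvent on $\Omega_n$ and invokes the standard monotone convergence of Dirichlet forms under domain exhaustion (a routine consequence of monotone convergence of bilinear forms as in \cite{FOT}) to conclude $G_\lambda^{\Omega_n}\mathbf{1}\uparrow G_\lambda \mathbf{1}$, whence $w_\lambda^{(n)}\downarrow w_\lambda$. Once this and the elliptic regularity needed to upgrade weak solutions to classical ones are granted, every remaining step is elementary.
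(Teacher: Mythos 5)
The paper itself does not prove Theorem \ref{thm-khas}: it is quoted as a classical result and the reader is referred to Khas'minskii \cite{Khas} and to Grigor'yan's survey and book, so there is no in-paper argument to compare against. Your proposal is, in essence, the standard proof found in those references (Grigor'yan's elliptic characterization): take $w_\lambda=1-\lambda G_\lambda\mathbf{1}$ as the canonical bounded $\lambda$-harmonic function, observe $w_\lambda=\int_0^\infty \lambda e^{-\lambda t}(1-P_t\mathbf{1})\,dt$ so that $w_\lambda\equiv 0$ iff the heat semigroup is conservative, and in the converse direction dominate any bounded (sub)solution by $M w_\lambda$ via exhaustion, Dirichlet problems on $\Omega_n$, and the weak maximum principle for $\Delta+\lambda$; the scheme and the sign bookkeeping in the paper's non-negative convention are correct. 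Two points deserve the care you partly flag: since $\mathbf{1}\notin L^2$ in general, the identity $(\Delta+\lambda)G_\lambda\mathbf{1}=\mathbf{1}$ and the smoothness of $w_\lambda$ have to be justified through the $L^\infty$-extension of the semigroup (e.g.\ by monotone approximation $\phi_n\uparrow\mathbf{1}$ with $\phi_n\in C_c$, passing the distributional identity to the limit, then elliptic regularity), and the identification $w^{(n)}_\lambda\downarrow w_\lambda$ uses exactly the minimality $\fff=H_0^1(X)$, either probabilistically via $\tau_n\uparrow\zeta$ or analytically via $P^{\Omega_n}_t\mathbf{1}\uparrow P_t\mathbf{1}$. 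Note also that your comparison step never uses non-negativity of the subsolution, which is precisely why applying it to $v$ and $-v$ settles case (1); with these standard ingredients granted, the argument is complete and the ``some/all $\lambda$'' equivalence follows from the equivalence chain itself.
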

\begin{rem}\rm{
  The solution in (1) is called a $\lambda$-harmonic function, while the one in (2) is called a $\lambda$-subharmonic function.}
\end{rem}
The above theorem is largely due to Khas'minskii \cite{Khas}. See also the survey \cite{GRI-SURVEY} and the book \cite{GRIBOOK} of Grigor'yan.
A relaxed version of this Khas'sminski type criterion, the so-called weak Omori-Yau maximum principle, is first studied by Pigola, Rigoli and Setti \cite{PRSProc, PRSSurvey} (cf. related work of Omori \cite{Omori} and Yau \cite{Yau75}). In particular, they prove the following criterion for stochastic completeness.
\begin{thm}[Pigola et al. \cite{PRSProc}]\label{thm-weak-OY}
   The manifold $X$ is stochastically incomplete if and only if there is a $C^2$-function $u$ with $u^*=\sup u< \infty$  violating the weak Omori-Yau maximum principle, that is, $\exists \alpha>0$ such that
   \[\sup_{x\in\Omega_{\alpha}}\Delta u(x)\le -\alpha,\]
   where $\Omega_{\alpha}=\{x\in X, x>u^*-\alpha\}$.
\end{thm}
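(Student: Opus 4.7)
My plan is to reduce the claimed equivalence to Khas'minskii's criterion (Theorem~\ref{thm-khas}\,(2)), which characterises stochastic incompleteness by the existence of a bounded, nonzero, non-negative $C^2$ function $v$ satisfying $\Delta v + \lambda v \le 0$ for some $\lambda > 0$. The two sides of the desired equivalence will be matched against Khas'minskii's condition by short, self-contained constructions, one in each direction.

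For the forward implication, assume $X$ is stochastically incomplete and let $v$ be a Khas'minskii witness. Because $v$ is bounded and nontrivial, $v^* := \sup v > 0$. On the superlevel set $\Omega_\beta = \{v > v^* - \beta\}$, the pointwise bound $\Delta v \le -\lambda v$ gives
\[ \sup_{\Omega_\beta} \Delta v \le -\lambda (v^* - \beta). \]
Choosing any $\beta \in (0, \lambda v^*/(1+\lambda))$ forces $\lambda(v^* - \beta) \ge \beta$, so the same $v$ witnesses a failure of the weak Omori--Yau principle with $\alpha := \beta$.

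For the converse, let $u$ be $C^2$ with $u^* = \sup u < \infty$ and $\sup_{\Omega_\alpha} \Delta u \le -\alpha$. Define the cutoff $g(s) = s^3$ for $s \ge 0$ and $g(s) = 0$ for $s < 0$, so that $g \in C^2(\mathbb{R})$, and set $v := g(u - u^* + \alpha)$. Then $v$ is $C^2$, bounded, non-negative, and nontrivial because $u$ takes values arbitrarily close to $u^*$. The chain rule, in the paper's convention $\Delta = -\mathrm{div}\,\mathrm{grad}$, yields
\[ \Delta v = g'(u - u^* + \alpha)\,\Delta u - g''(u - u^* + \alpha)\,|\nabla u|^2. \]
Using $g', g'' \ge 0$, the hypothesis $\Delta u \le -\alpha$ on $\Omega_\alpha$, and the elementary comparison $g(s) \le (\alpha/3)\,g'(s)$ for $s \in [0, \alpha]$, a short calculation shows $\Delta v + \lambda v \le 0$ globally for every $\lambda \in (0, 3]$ (outside $\Omega_\alpha$ both $v$ and $\Delta v$ vanish by construction). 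Theorem~\ref{thm-khas}\,(2) then delivers stochastic incompleteness.

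The main subtlety is the choice of the cutoff $g$. The naive truncation $s_+$ is only Lipschitz and $(s_+)^2$ is only $C^1$, so the cubic is the lowest integer exponent producing a $C^2$ composition while still obeying the pointwise comparison $\lambda g \le g'$ needed to close Khas'minskii's inequality. A mollified or exponential cutoff would offer more flexibility in the allowed range of $\lambda$, but the cubic is the most elementary option and keeps every constant explicit.
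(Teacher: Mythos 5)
Your argument is correct. Note that the paper does not actually prove Theorem \ref{thm-weak-OY} itself (it is quoted from \cite{PRSProc}); what the paper proves is the generalization, Proposition \ref{prop-weak-OY}, for strongly local Dirichlet forms, and your proposal follows essentially the same strategy as that proof: reduce to the Khas'minskii-type criterion, observe that a $\lambda$-subharmonic witness automatically violates the weak Omori--Yau principle on a suitable superlevel set (the paper takes $\alpha=\frac{1}{2}\func{esssup}u$, $\delta=\frac{1}{2}\lambda\func{esssup}u$; your choice $\beta<\lambda v^{*}/(1+\lambda)$ is the same idea with a different constant), and, conversely, turn a WOYMP-violating function into a $\lambda$-subharmonic one by composing with a convex cutoff that vanishes below $u^{*}-\alpha$. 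The only genuine difference is in how the cutoff is handled: working in the smooth category, you can take the explicit $C^{2}$ function $g(s)=s_{+}^{3}$ and use the pointwise chain rule $\Delta g(u)=g'(u)\Delta u-g''(u)\vert\nabla u\vert^{2}$, which closes the estimate with the elementary bound $g\le(\alpha/3)g'$ on $[0,\alpha]$ and no limiting procedure; the paper, having only the weak formulation available, must instead use $f(x)=(x-c)_{+}$, a mollified monotone sequence $\{f_{n}\}$ of smooth convex approximations, the energy-measure identity of Lemma \ref{lem-technical-oy}, and an $\eee_{1}$-convergence argument to pass to the limit. Your route is shorter and fully explicit but is confined to the manifold setting (it uses $C^{2}$ regularity and the classical chain rule); the paper's route pays for its approximation machinery with the generality needed for metric graphs and arbitrary strongly local forms. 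All the details you give check out, including the sign convention ($\Delta$ non-negative definite) and the verification that $v$ and $\Delta v$ vanish off $\Omega_{\alpha}$ because $g'(0)=g''(0)=0$.
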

\begin{rem}\rm{
  The weak Omori-Yau maximum principle for a manifold amounts to the nonexistence of the function $u$ described in the above theorem, and is equivalent to the stochastic completeness of the manifold. The function $u$, if it exists, will be called a WOYMP-violating function following the notation of  B\"{a}r and Bessa \cite{Bar-Bessa}. In comparison with $\lambda$-harmonic functions, the WOYMP-violating functions are sometimes easier to work with. See for example \cite{Bar-Bessa} where it is applied to disprove a conjecture of Grigor'yan.}
\end{rem}

Many authors observed that stochastic completeness is closely related to the large scale geometry of the manifold. Grigor'yan \cite{GRI86-87, Gri88-89} obtained a sharp criterion merely in terms of the volume growth with respect to the geodesic metric.
\begin{thm}[Grigor'yan]\label{thm-gri-vol}
  If the volume of balls $B(x_0, r)$ centered at some fixed point $x_0\in X$ satisfies that
 \begin{equation}
\label{eq-vol}
\int^{\infty}\frac{rdr}{\log \lp\mu\lp B(x_0, r)\rp\rp}=\infty,
\end{equation}
then the manifold $X$ is stochastically complete.
\end{thm}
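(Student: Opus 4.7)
The plan is to prove the contrapositive via Khas'minskii's criterion (Theorem~\ref{thm-khas}(2)): I assume $X$ is stochastically incomplete, take a bounded non-negative non-trivial $C^2$ solution $u$ of $\Delta u+\lambda u\leq 0$ for some $\lambda>0$, and aim to show that the volume growth integral must in fact be finite, contradicting the hypothesis.

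The first step is a standard energy estimate. Testing the inequality against $\psi^2 u$ for a Lipschitz cutoff $\psi$ of compact support and using Green's identity (in the paper's non-negative-definite sign convention) gives
\begin{equation*}
\int\psi^2|\nabla u|^2\,d\mu + \lambda\int\psi^2 u^2\,d\mu \;\leq\; -2\int\psi u\,\nabla\psi\cdot\nabla u\,d\mu,
\end{equation*}
and Cauchy--Schwarz absorbs the right-hand side into the gradient term to yield
\begin{equation*}
\lambda\int\psi^2 u^2\,d\mu \;\leq\; \int|\nabla\psi|^2 u^2\,d\mu.
\end{equation*}
Specializing $\psi$ to a radial piecewise linear cutoff equal to $1$ on $B(x_0,r)$ and vanishing outside $B(x_0,R)$, so that $|\nabla\psi|\leq(R-r)^{-1}$, and writing $I(r):=\int_{B(x_0,r)}u^2\,d\mu$, this produces the key recursive bound
\begin{equation*}
I(R) \;\geq\; \bigl(1+\lambda(R-r)^2\bigr)\,I(r).
\end{equation*}

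The second step is to iterate along a sequence $\rho_0<\rho_1<\cdots\to\infty$ with $I(\rho_0)>0$ (which holds if $u\not\equiv 0$ and $\rho_0$ is large enough by unique continuation). Writing $\delta_k:=\rho_{k+1}-\rho_k$, iteration and the trivial bound $I(\rho_n)\leq(\sup u)^2 V(\rho_n)$, with $V(r):=\mu(B(x_0,r))$, give
\begin{equation*}
\prod_{k=0}^{n-1}\bigl(1+\lambda\delta_k^2\bigr) \;\leq\; \frac{(\sup u)^2\,V(\rho_n)}{I(\rho_0)},
\end{equation*}
and taking logarithms yields
\begin{equation*}
\sum_{k=0}^{n-1}\log\bigl(1+\lambda\delta_k^2\bigr) \;\leq\; \log V(\rho_n) + C.
\end{equation*}

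The third step, which I expect to be the main obstacle, is to calibrate the sequence $\{\rho_k\}$ so as to convert this discrete bound into the claimed integral statement. The increments $\delta_k$ must be tuned adaptively to the a priori unknown volume function $V$, roughly so that each factor in the product contributes the maximum possible amount relative to its cost in geodesic distance; a summation-by-parts/Abel-type comparison of the resulting sum with a Riemann sum for $\int^{\rho_n} r\,dr/\log V(r)$ should then force
\begin{equation*}
\int^\infty\frac{r\,dr}{\log V(r)} \;<\; \infty,
\end{equation*}
contradicting the hypothesis. This last calibration is delicate because extracting the \emph{sharp} logarithmic criterion (rather than the much weaker polynomial bound one obtains from a naive choice such as $\delta_k\equiv$ constant) is precisely the technical core of Grigor'yan's argument.
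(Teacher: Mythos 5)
This theorem is quoted background in the paper (it cites Grigor'yan's original articles) and is never proved there, so there is no in-paper argument to compare with; judging your proposal on its own terms, Steps 1 and 2 are correct but Step 3 is a genuine gap, and not merely a ``delicate calibration'' that remains to be done: it cannot be done from the recursion you derived. The Caccioppoli estimate $\lambda\int\psi^2u^2\,d\mu\le\int|\nabla\psi|^2u^2\,d\mu$ and the resulting bound $I(R)\ge\bigl(1+\lambda(R-r)^2\bigr)I(r)$ are fine, but they are too lossy. Quantitatively, $\log\bigl(1+\lambda\delta^2\bigr)\le c(\lambda)\,\delta$ with $c(\lambda):=\sup_{\delta>0}\delta^{-1}\log\bigl(1+\lambda\delta^2\bigr)<\infty$, so for \emph{every} choice of partition $\rho_0<\rho_1<\dots<\rho_n$ one has $\sum_{k=0}^{n-1}\log\bigl(1+\lambda\delta_k^2\bigr)\le c(\lambda)\,(\rho_n-\rho_0)$. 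Hence your inequality $\sum_k\log\bigl(1+\lambda\delta_k^2\bigr)\le\log V(\rho_n)+C$ can only be contradicted when $\log V(r)$ grows sublinearly along a sequence; no adaptive choice of the increments extracts more than linear-in-$r$ information. Equivalently, $I(r)=I(\rho_0)e^{\sqrt{2\lambda}(r-\rho_0)}$ satisfies $I(R)\ge\bigl(1+\lambda(R-r)^2\bigr)I(r)$ for all $r<R$ (since $e^{\sqrt{2\lambda}s}\ge1+\lambda s^2$), so the recursion is fully consistent with $\log V(r)\asymp r$, a growth for which the hypothesis $\int^{\infty}r\,dr/\log V(r)=\infty$ holds. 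Therefore no contradiction with the volume hypothesis can follow from these estimates: at best your argument shows that subexponential volume growth implies stochastic completeness, which falls short even of the $\mu(B(x_0,r))\le e^{Cr^2}$ criterion mentioned in the remark after the theorem, let alone the sharp integral condition.

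To reach the sharp criterion one needs a genuinely stronger mechanism than testing the inequality against $\psi^2u$ with a plain cutoff: Grigor'yan's proof is parabolic, based on an integrated maximum principle for the heat equation with Gaussian-type weights built from the volume function, and the known elliptic routes (e.g.\ Pigola--Rigoli--Setti, starting from the weak Omori--Yau principle as in Theorem \ref{thm-weak-OY}) rely on comparison/Khas'minskii-type test functions or weighted energy estimates in which the weight is constructed from $\log V$, not on the unweighted Caccioppoli iteration. (A small side remark: you do not need unique continuation for $I(\rho_0)>0$; continuity of the nonzero, non-negative $u$ already gives it.) As it stands, the proposal is an incomplete proof whose missing step fails for a structural reason.
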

\begin{rem}\rm{
   The integral symbol $\int^{\infty}$ means that we only care about the possible divergence happening at $\infty$. An immediate consequence is that the volume growth with $\mu\lp B(x_0, r)\rp\le \exp \lp C r^2\rp$ for some constant $C>0$ implies stochastic completeness. This slightly weaker result can be obtained through various different approaches, see for example the works of Hsu \cite{HSU89}, Karp and Li \cite{Karp-Li}, Takeda \cite{Takeda89}, Davies \cite{DAV92}, and Pigola et al. \cite{PRS03}. It is worth pointing out that most authors work with the heat equation or the heat semigroup (``parabolic" approach), while in \cite{PRS03} Pigola et al. directly start from Theorem \ref{thm-weak-OY} (``elliptic" approach) to obtain the volume growth criterion.}
\end{rem}

\subsection{Strongly local Dirichlet forms}
Both Theorem \ref{thm-khas} and Theorem \ref{thm-gri-vol} are generalized to the setting of a strongly local, regular, irreducible Dirichlet form $\lp \eee, \fff \rp$ ($\eee=\eee^{(c)}$) by Sturm \cite{Sturm94}.

An analogue of the volume growth criterion in Theorem \ref{thm-gri-vol} cannot hold for the original metric $d$ on $X$ in general, since it does not have a quantitative relation with the Dirichlet form. Sturm adopted the notion of intrinsic metric (Biroli and Mosco \cite{BirMos91, BirMos95} and Davies \cite{DAV89}) to overcome this difficulty.

To introduce the intrinsic metric, we need some technical preparations following \cite{Sturm94} and Section 3.2 in \cite{FOT}. The results summarized here will also be used in Section \ref{sect-weak-OY}. For each $u\in \fff_b$, there exists a unique Radon measure $\Gamma(u, u)$, the so-called energy measure (or carr\'{e} du champ, cf. Le Jan \cite{LeJan}), such that
\begin{equation}
\label{eq-def-Gamma}
\int_X \phi d\Gamma(u,u)=\eee(u\phi, u)-\frac{1}{2}\eee(u^2, \phi), \forall \phi\in \fff\cap C_c(X).
\end{equation}
It can be proven that $\eee(u, u)=\Gamma(u,u)(X)$ for $u\in \fff_b$. By polarization, one can define $\Gamma(u, v)$ as
 \[\Gamma(u,v)=\frac{1}{2}\lp\Gamma(u+v,u+v)-\Gamma(u,u)-\Gamma(v,v)\rp,\]
 a signed Radon measure for $u, v\in\fff_b$, and we have $\eee(u, v)=\Gamma(u,v)(X)$. Note that $\Gamma(\cdot,\cdot)$ is symmetric.

 We say that a function $u\in \fff_{\func{loc}}$, the space of functions locally in $\fff$, if for any relatively compact open subset $G\subseteq X$, there exists a function $w$ in $\fff$ which coincides with $u$ $\mu$-a.e. on $G$. In a similar way we can define $\fff_{b, \func{loc}}$. It is easy to see that $\fff_{\func{loc}}\cap L^{\infty}\subseteq \fff_{b, \func{loc}}$. The bilinear, signed Radon measure valued map $\Gamma$ can be uniquely extended to $\fff\times\fff$ and then to $\fff_{\func{loc}}\times \fff_{\func{loc}}$.

  A typical example of energy measure is the one given by the gradient operator on Riemannian manifolds: $(\nabla u \cdot \nabla v) d\mu$ with $u, v\in H^1_0$. The general energy measure $\Gamma$ shares some nice properties with that on manifolds, for examples a Leibniz rule and a chain rule. More details will be given in Section \ref{sect-weak-OY}. The same as in \cite{Sturm94}, we will frequently take a quasi-continuous modification of a function in $\fff_{\func{loc}}$ without specification. 

A pseudo metric $\rho$ on $X$ can be defined using the energy measure $\Gamma$ by
\[\rho(x,y)=\sup \{u(x)-u(y): u\in \fff_{\func{loc}}\cap C(X), d\Gamma(u,u)\le d\mu\}.\]
On manifolds, the condition $d\Gamma(u,u)\le d\mu$ amounts to $\vert \nabla u\vert^2 \le 1$. In general, $\rho$ can be degenerate and Sturm proposed a basic assumption.
\begin{assm}\label{assm-basic}
  The pseudo metric $\rho$ is a metric and induces a topology that is equivalent to the original one.
\end{assm}
Under Assumption \ref{assm-basic}, Sturm proves that the volume growth condition (\ref{eq-vol}) with respect to the intrinsic metric $\rho$ implies the stochastic completeness of the Dirichlet form $\lp \eee, \fff \rp$ (Theorem 4 in \cite{Sturm94}).
A key technical feature of $\rho$ proved in \cite{Sturm94} is that the distance function $\rho_x=\rho(x, \cdot)$ satisfies
\begin{equation}
\label{eq-intrinsic}
\rho_x\in \fff_{\func{loc}}\cap C(X), \text{~~~~and~~~~} d\Gamma\lp \rho_x, \rho_x\rp \le d\mu,
\end{equation}
 for each $x\in X$. Sturm's result can be viewed as a generalization of Grigor'yan's, since on manifolds the intrinsic metric coincides with the geodesic metric.

Now we turn to Sturm's analogue of Theorem \ref{thm-khas}.  In such a general setting, it is hard to work with the generator directly, so a weak formulation of $\lambda$-subharmonic functions is adopted in \cite{Sturm94}.
\begin{defi}\label{defi-subharmonic}
  A non-negative function $u$ on $X$ is called $\lambda$-subharmonic if $u\in \fff_{\func{loc}}\cap L^{\infty}$ and
  \[\eee(u, \phi)+\lambda\int_X u\phi d\mu\le 0\]
  for all non-negative $\phi\in \fff_{c}$.
\end{defi}
\begin{rem}\rm{Note that we understand $\eee(u, \phi)$ as $\eee(u_{\phi}, \phi)$ for some $u_{\phi}\in \fff$ that coincides with $u$ on a relatively open set $G_{\phi}\supset \func{supp} \phi$. By strong locality, this is a coherent definition. We will adopt it later as well.

We can replace the condition ``$\phi\in \fff_{c}$" by ``$\phi\in \fff_{b, c}$".
Indeed, letting $\phi_n=\phi\wedge n$ for $n\in \NN$, we see that $\phi_n\in \fff_{b, c}$, $\func{supp}\phi_n\subseteq \func{supp}\phi$, and $\phi_n\rightarrow \phi$ both in the sense of $\eee_1$ and monotone convergence, as $n\rightarrow \infty$.
}
\end{rem}
Among other things, Sturm proved the equivalence of stochastic incompleteness and the existence of a non-negative, nonzero bounded $\lambda$-subharmonic function for some/all $\lambda>0$ (cf. Theorem 4 in \cite{Sturm94}).

Based on Sturm's result, in Section \ref{sect-weak-OY} we prove the following weak formulation (and generalization) of the weak Omori-Yau maximum principle in Theorem \ref{thm-weak-OY}. 
\begin{prop}\label{prop-weak-OY}
  Let $\lp \eee, \fff \rp$ be a strongly local, regular, irreducible Dirichlet form on $(X, d)$ satisfying Assumption \ref{assm-basic}. 
  Then  $\lp \eee, \fff \rp$ is stochastically incomplete if and only if there exist a non-negative function $v\in \fff_{\func{loc}}\cap L^{\infty}$ with $\func{esssup} v>0$, and two constants $\alpha, \delta>0$, such that
  \[\eee(v, \phi)\le -\delta\int_{\Omega_{\alpha}} \phi d\mu\]
  for all non-negative $\phi\in \fff_{b,c}$ with $\func{supp}\phi\subseteq \overline{\Omega_{\alpha}}$. Here $\Omega_{\alpha}=\{x\in X: v(x)>\func{esssup} v-\alpha\}$.
\end{prop}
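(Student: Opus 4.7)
The plan is to reduce the proposition to Sturm's characterization of stochastic (in)completeness (Theorem 4 in \cite{Sturm94}): $(\eee,\fff)$ is stochastically incomplete if and only if there exists a non-negative, nonzero, bounded $\lambda$-subharmonic function (in the sense of Definition \ref{defi-subharmonic}) for some $\lambda > 0$. The equivalence in the proposition then amounts to exhibiting a bijection between such subharmonic functions and WOYMP-violating functions.

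For the necessity direction, let $v$ be the $\lambda$-subharmonic function produced by Sturm. Put $M := \func{esssup} v > 0$, pick any $\alpha \in (0, M)$, and set $\delta := \lambda(M - \alpha) > 0$. For any non-negative $\phi \in \fff_{b,c}$ with $\supp \phi \subseteq \overline{\Omega_\alpha}$, the inequality $v > M - \alpha$ on $\Omega_\alpha$ together with $\phi \ge 0$ and $\phi = 0$ off $\overline{\Omega_\alpha}$ gives $\int v\phi\,d\mu \ge (M - \alpha)\int_{\Omega_\alpha} \phi\,d\mu$; substituting into the $\lambda$-subharmonic inequality $\eee(v,\phi) \le -\lambda \int v\phi\,d\mu$ yields exactly the desired WOYMP-violating estimate.

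For the sufficiency direction, given a WOYMP-violating $v$ with $M := \func{esssup} v$, we may assume $0 < \alpha < M$ without loss of generality (if $\alpha \ge M$, any smaller $\alpha' \in (0, M)$ satisfies the same inequality with $\delta' = \delta$, since $\overline{\Omega_{\alpha'}} \subseteq \overline{\Omega_\alpha}$ and $\int_{\Omega_\alpha} \phi\,d\mu \ge \int_{\Omega_{\alpha'}} \phi\,d\mu$ for non-negative $\phi$). Set
\[w := (v - (M - \alpha))_+,\]
which by the normal-contraction property lies in $\fff_{\func{loc}} \cap L^{\infty}$, satisfies $0 \le w \le \alpha$, and has $\func{esssup} w = \alpha > 0$. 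I claim that $w$ is $\lambda$-subharmonic with $\lambda := \delta/\alpha$; Sturm's theorem then delivers stochastic incompleteness. Fix non-negative $\phi \in \fff_{b,c}$ and introduce the piecewise linear cutoff $\eta_\epsilon(t) := \min\{\epsilon^{-1}(t - (M - \alpha))_+, 1\}$. Then $\phi_\epsilon := \phi \cdot (\eta_\epsilon \circ v) \in \fff_{b,c}$ is non-negative with $\supp \phi_\epsilon \subseteq \overline{\Omega_\alpha}$, so the hypothesis yields $\eee(v, \phi_\epsilon) \le -\delta \int_{\Omega_\alpha} \phi_\epsilon\,d\mu$. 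By the Leibniz rule for the energy measure,
\[\eee(v, \phi_\epsilon) = \int (\eta_\epsilon \circ v)\,d\Gamma(v,\phi) + \int \phi\,(\eta'_\epsilon \circ v)\,d\Gamma(v,v),\]
and the second term is non-negative and may be dropped. Sending $\epsilon \to 0$, dominated convergence applied on the left to the locally finite signed measure $\Gamma(v,\phi)$ and on the right to $\mu$, combined with the chain rule $d\Gamma(w,\phi) = \chi_{\Omega_\alpha}\,d\Gamma(v,\phi)$, yields $\eee(w, \phi) \le -\delta \int_{\Omega_\alpha} \phi\,d\mu$. Since $0 \le w \le \alpha$ and $w = 0$ outside $\Omega_\alpha$,
\[\eee(w, \phi) + \lambda \int w\phi\,d\mu \le -\delta \int_{\Omega_\alpha} \phi\,d\mu + \lambda\alpha \int_{\Omega_\alpha} \phi\,d\mu = 0,\]
which is the required $\lambda$-subharmonicity (extended to arbitrary $\phi \in \fff_c$ by the remark following Definition \ref{defi-subharmonic}).

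The chief obstacle is the sufficiency direction, specifically converting a hypothesis confined to test functions supported in $\overline{\Omega_\alpha}$ into a global $\lambda$-subharmonic inequality valid for arbitrary $\phi \in \fff_{b,c}$. The cutoff trick via $\eta_\epsilon \circ v$ succeeds precisely because the ``boundary'' term arising from differentiating the cutoff is concentrated on the thin layer $\{M - \alpha < v < M - \alpha + \epsilon\}$ near the level set and carries a favourable sign, so it can simply be discarded.
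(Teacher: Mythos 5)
Your overall strategy coincides with the paper's: both directions are reduced to Sturm's characterization via bounded $\lambda$-subharmonic functions; the easy direction is a direct substitution (your choice $\delta=\lambda(M-\alpha)$ versus the paper's $\alpha=\tfrac12\func{esssup}u$ is immaterial); and the hard direction truncates $v$ at the level $M-\alpha$ and removes the restriction $\func{supp}\phi\subseteq\overline{\Omega_\alpha}$ by a cutoff concentrated in a thin layer above the level set, whose error term has a favourable sign. The difference lies entirely in how that last step is implemented, and that is where your argument has a gap.

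You invoke the chain rule twice for functions that are merely Lipschitz: once for the piecewise-linear cutoff $\eta_\epsilon$ (harmless --- replace $\eta_\epsilon$ by a $C^1$ non-decreasing cutoff vanishing on $(-\infty,M-\alpha]$ and Lemma \ref{lem-prod-chain} applies verbatim), and once, crucially, in the identity $d\Gamma(w,\phi)=\chi_{\{v>M-\alpha\}}\,d\Gamma(v,\phi)$ for $w=(v-(M-\alpha))_+$, which you need in order to identify $\lim_{\epsilon\to 0}\int(\eta_\epsilon\circ v)\,d\Gamma(v,\phi)$ with $\eee(w,\phi)$. This is not a consequence of the $C^1$ chain rule stated in the paper; it requires in particular that $\Gamma(v,\phi)$ (for which, by Cauchy--Schwarz for energy measures, it suffices that $\Gamma(v,v)$) gives no mass to the level set $\{v=M-\alpha\}$ --- note that $\eta_\epsilon\circ v$ converges pointwise to $\chi_{\{v>M-\alpha\}}$, which differs from $\chi_{\{v\ge M-\alpha\}}$ exactly on that set. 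The fact that the energy measure of a strongly local regular Dirichlet form does not charge level sets (energy image density in the sense of Bouleau--Hirsch, equivalently the truncation rule for energy measures) is true, but it is a nontrivial theorem that you assert without proof or citation, and it is precisely the point the paper's argument is engineered to avoid: the $C^3$ convex approximations $f_n$ of Lemma \ref{lem-technical-oy} vanish identically on a neighbourhood of the level $c$, so the level set never carries any of the quantities involved, and the final passage to the limit is carried out in $\eee_1$-norm via normal contractions rather than through convergence of energy measures. So either cite the Lipschitz truncation/chain rule for strongly local forms explicitly, or adopt the paper's smoothing device; as written, the identification $\lim_{\epsilon\to0}\int(\eta_\epsilon\circ v)\,d\Gamma(v,\phi)=\eee(w,\phi)$ is unjustified.
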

\begin{rem}\rm{Before us, in the setting of metric graphs, a more direct analogue to Theorem \ref{thm-weak-OY} was obtained by Haeseler and Wojciechowski \cite{HaeWoj}. Proposition \ref{prop-weak-OY} is more suitable for our purpose since the weak formulation puts less restrictions on the regularity of the WOYMP-violating functions. The flexibility gained here may lead to further applications.
}
\end{rem}

\subsection{Weighted graphs}
Stochastic completeness of continuous time Markov chains, defined using the data of weighted graphs, was first studied extensively in probabilistic language. See for example the classical works of Chung \cite{Chung}, Feller \cite{Feller56, FEL66, Feller57}, Freedman \cite{freedman}, Reuter \cite{Reuter57} and the references therein.

In this article, we take the ``geometric analysis" point of view and treat weighted graphs as analogues of Riemannian manifolds. For this purpose, we choose to work with the language of Dirichlet forms. We basically follow the framework of Keller and Lenz \cite{KL} except that we restrict our attention to the locally finite and connected graph case.

Let $(V, E)$ be a locally finite, countably infinite, connected, undirected graph without loops or multi-edges. For abbreviation, we call such a graph a simple graph. Here $V$ is the vertex set, and $E$ the edge set which can be viewed as a symmetric subset of $V\times V$. We will write $x\sim y$ for a pair $(x,y)\in E$, and call them neighbors. Note that we do not allow loops, so $x\not\sim x$ for $x\in V$. For $x\in V$, we
define the degree of $x$ to be $\deg(x)=\#\{y\in
V: y\sim x\}$, i.e. the number of neighbors of $x$ which is finite by assumption. 

Connectivity of the graph guarantees a path of some length $n\in \mathbb{N}_{+}$ connecting any pair of distinct vertices $x, y\in V$, that is, a sequence of vertices $x_0, \cdots, x_n$ in $V$ such that
\[x_0=x, x_n=y, x_k\sim x_{k+1} \text{~~for all~~} 0\leq k\leq n-1.\]
The graph metric $d_{0}$ on the graph $(V, E)$, can then be defined through
\[d_{0}(x,y)=\inf\{n: \text{~~~~there exists a path of length~~}n\text{~~connecting~~}x,y\}.\]
for any pair of $x\neq y$. This induces the discrete topology on $V$. Note that $C_c(V)$ in this case is simply the space of functions that vanishes outside a finite set.

Set a weight function $\mu : V\rightarrow(0,\infty)$ on vertices, viewed as a Radon measure on $V$.
Let $\omega: V\times V\rightarrow[0,\infty)$ be a weight function on edges such that:
\begin{enumerate}
 \item $\omega(x,y)=\omega(y,x)$  for all  $x, y \in V$;
 \item $(x,y)\in E \Leftrightarrow \omega(x,y)>0$.
\end{enumerate}
The function $\omega$ plays the role of a jump kernel. The triple $(V, \omega, \mu)$ is usually called a weighted graph. Note that given a weighted graph $\g$, we can determine the edge set $E$ through $\omega$.

Define $(\eee, \fff_{\max})$ through:
\begin{align*}
\fff_{\max}=\{u: V\rightarrow\RR, \sum_x\sum_y \omega(x,y)\lp u(x)-u(y)\rp^2<\infty\};\\
\eee(u,u)=\frac{1}{2}\sum_x\sum_y \omega(x,y)\lp u(x)-u(y)\rp^2, \forall u\in \fff_{\max}.
\end{align*}
We have $C_c(V)\subseteq \fff_{\max}$ by locally finiteness. The Dirichlet form $(\eee, \fff)$ that we are interested in is the restriction of $(\eee, \fff_{\max})$ given by taking $\fff$
to be the closure of $C_c(V)$ in $\fff_{\max}$ with respect to the $\eee_1$-norm. In the probabilistic language, we are considering the ``minimal" Markov chain.

In \cite{KL}, the regularity of $(\eee, \fff)$ is shown, and the generator $\lll$ is determined to be a certain restriction of the so-called formal Laplacian (also denoted by $\Delta$), which takes the form:
\begin{equation}
\label{eq-formal-lap}
\Delta u(x)=\frac{1}{\mu(x)}\sum_{y}\omega(x,y)(u(x)-u(y)), \forall x\in V.
\end{equation}

The idea of studying this kind of operator as a discrete analogue of elliptic differential operators dates back at least to Courant, Friedrichs and Lewy \cite{CourFriedLewy}. For more recent works from this point of view, see for example Colin de Verdi\'{e}re \cite{CdVBook98}, Dodziuk \cite{Dod06}, Dodziuk and Mathai \cite{Dod-Mat06}, Wojciechowski \cite{WOJ-Indiana, WOJ-Survey}, and Weber \cite{WEBER}. 

An analogue of Theorem \ref{thm-khas} is proved by Keller and Lenz \cite{KL} in this setting.
\begin{thm}[\cite{KL}]
  \label{thm-KL} The weighted graph $X$ (not necessarily connected) with the Dirichlet form $\lp \eee, \fff \rp$ is stochastically incomplete if and only if for some/all $\lambda>0$, there is a bounded, nonzero, non-negative solution to the equation
$\Delta u +\lambda u=0$.
\end{thm}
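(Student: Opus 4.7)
The plan is to adapt the classical Khas'minskii strategy, organising everything around the $L^\infty$-extension of the resolvent $G_\lambda = (\lambda + \lll)^{-1}$ and the function
\[
w_\lambda := \mathbf{1} - \lambda G_\lambda \mathbf{1}.
\]
Since $(P_t)_{t>0}$ is sub-Markovian, $0\le w_\lambda\le 1$, and the Laplace-transform identity $\lambda G_\lambda \mathbf{1} = \int_0^\infty \lambda e^{-\lambda t} P_t\mathbf{1}\,dt$, combined with the monotonicity of $t\mapsto P_t\mathbf{1}$, shows that $w_\lambda\equiv 0$ for one $\lambda>0$ is equivalent to $w_\lambda\equiv 0$ for every $\lambda>0$, and in turn to stochastic completeness. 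This simultaneously handles the ``some/all'' aspect of the statement.

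I will then verify that $w_\lambda$ itself pointwise solves $\Delta w_\lambda + \lambda w_\lambda = 0$ on $V$. For this I would invoke the fact, central to the framework of \cite{KL} for locally finite weighted graphs, that for every bounded $f\colon V\to\RR$ the function $G_\lambda f$ lies in the pointwise domain of the formal Laplacian \eqref{eq-formal-lap} and satisfies $(\lambda+\Delta)G_\lambda f = f$ pointwise; specialising to $f=\mathbf{1}$ and using $\Delta\mathbf{1} = 0$ gives the desired equation. This already disposes of one direction: if the form is stochastically incomplete, then $u := w_\lambda$ is a bounded, nonzero, non-negative $\lambda$-harmonic function in the required sense.

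For the converse, suppose $u\ge 0$ is bounded and nonzero with $\Delta u + \lambda u = 0$, set $C := \sup_V u < \infty$, and consider $v := C\mathbf{1} - u \ge 0$. Then $v$ is bounded and pointwise satisfies $(\lambda+\Delta)v = \lambda C\mathbf{1}$. The comparison I intend to apply is a minimality principle for $G_\lambda$: for every bounded $f\ge 0$ and every bounded $g\ge 0$ obeying $(\lambda+\Delta) g = f$ pointwise, one has $g\ge G_\lambda f$. Granting this with $f = \lambda C\mathbf{1}$ and $g = v$ yields $v \ge C(\mathbf{1}-w_\lambda)$, hence $u \le C\, w_\lambda$; since $u\not\equiv 0$ this forces $w_\lambda \not\equiv 0$, i.e.\ the stochastic incompleteness that we want.

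The main obstacle is precisely this minimality principle. The natural route is to exhaust $V$ by an increasing sequence of finite subsets $V_n\uparrow V$ and to solve on each $V_n$ the discrete Dirichlet problem $(\lambda+\Delta)g_n = f$ with zero condition outside $V_n$; this is a finite linear system on which the discrete maximum principle gives the needed pointwise comparison against any competing bounded non-negative $g$. Local finiteness preserves pointwise convergence of $\Delta g_n$, so the monotone limit $g_\infty$ inherits $(\lambda+\Delta)g_\infty = f$, and identifying $g_\infty$ with the $L^\infty$-extension $G_\lambda f$ used in the definition of $w_\lambda$ is the genuinely delicate step; it again rests on the resolvent formalism of \cite{KL}.
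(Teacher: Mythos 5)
The paper does not prove Theorem \ref{thm-KL} at all: it is imported verbatim from Keller--Lenz \cite{KL}, so there is no internal proof to compare against. Your outline is essentially the argument by which this criterion is established in the cited source (and classically by Khas'minskii/Grigor'yan on manifolds): stochastic incompleteness is encoded in $w_\lambda=\boldsymbol{1}-\lambda G_\lambda\boldsymbol{1}\not\equiv 0$, the Laplace transform and monotonicity of $t\mapsto P_t\boldsymbol{1}$ give the ``some/all'' equivalence, $w_\lambda$ is itself a bounded non-negative $\lambda$-harmonic function, and the converse follows from the minimality of $G_\lambda f$ among bounded non-negative pointwise solutions of $(\Delta+\lambda)g=f$, proved by exhaustion plus the discrete minimum principle. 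The structure is correct, and the minimum-principle comparison you sketch does go through: if $h=g-g_n$ were negative somewhere on the finite set $V_n$, its global minimum over $V$ would be attained in $V_n$ (since $h=g\ge 0$ outside), forcing $(\Delta+\lambda)h<0$ there, a contradiction. The two inputs you defer are exactly the load-bearing ones: (a) that the $L^\infty$-extension of the resolvent satisfies $(\Delta+\lambda)G_\lambda f=f$ pointwise (trivialized here by local finiteness, since every function lies in the pointwise domain of (\ref{eq-formal-lap}), and obtained from the $L^2$ identity by monotone approximation $f\boldsymbol{1}_{V_n}\nearrow f$), and (b) that the monotone limit of the Dirichlet-exhaustion solutions is $G_\lambda f$ itself; point (b) is precisely where the choice of $\fff$ as the closure of $C_c(V)$ (the ``minimal'' form) is used, via monotone convergence of the exhaustion resolvents, and it is established in \cite{KL}. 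So I see no genuine gap, only the caveat that, as written, your argument borrows these resolvent facts from the very paper the theorem is quoted from rather than reproving them; note also that connectivity is nowhere needed, consistent with the ``not necessarily connected'' phrasing of the statement.
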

Based on this result, there is also an analogue of Theorem \ref{thm-weak-OY} for weighted graphs.
\begin{thm}[\cite{HXP-OY}]\label{thm-weak-OY-graph}
Let $\g$ be a simple weighted graph with the formal Laplacian $\Delta$ as defined by (\ref{eq-formal-lap}). Then it is stochastically incomplete if and only if there is a non-negative WOYMP-violating function $u$ on $V$ with $u^*=\sup u< \infty$, that is, $\exists \alpha>0$ such that
   \[\sup_{x\in\Omega_{\alpha}}\Delta u(x)\le -\alpha,\]
   where $\Omega_{\alpha}=\{x\in X, x>u^*-\alpha\}$.
\end{thm}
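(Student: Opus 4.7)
The plan is to route the proof through the Keller-Lenz criterion (Theorem \ref{thm-KL}), which identifies stochastic incompleteness of $(V,\omega,\mu)$ with the existence of a bounded, non-negative, nonzero solution $u$ of $\Delta u + \lambda u = 0$ for some (equivalently, all) $\lambda>0$. Under this reduction the ``only if'' direction becomes almost immediate, while the ``if'' direction requires manufacturing a genuine $\lambda$-harmonic function from the WOYMP-violating one.

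For ``only if'', take $u$ produced by Theorem \ref{thm-KL} for some $\lambda>0$ and set $u^*=\sup u>0$. Since $\Delta u = -\lambda u$, for every $x\in \Omega_\alpha=\{u>u^*-\alpha\}$ one has $\Delta u(x)\le -\lambda(u^*-\alpha)$. Choosing $\alpha$ small enough, e.g.\ $\alpha\le \lambda u^*/(1+\lambda)$, gives $\lambda(u^*-\alpha)\ge \alpha$, so $u$ itself is WOYMP-violating.

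For the converse, let $u$ be WOYMP-violating with constants $u^*<\infty$ and $\alpha>0$. I would replace it by $v:=(u-(u^*-\alpha))_+$, which is non-negative, bounded by $\alpha$, and nontrivial since $\Omega_\alpha$ is nonempty. The inequality $v(y)\ge u(y)-(u^*-\alpha)$ forces $v(x)-v(y)\le u(x)-u(y)$ on $\Omega_\alpha=\{v>0\}$, whence $\Delta v(x)\le \Delta u(x)\le -\alpha$ there; on the complement $v$ vanishes and $\Delta v\le 0$ is automatic. Combined with $v\le \alpha$, these give $\Delta v + v\le 0$ pointwise, so $v$ is a bounded, non-negative, nonzero $1$-subharmonic function. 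To upgrade $v$ to a $1$-harmonic function I would run a Perron-type exhaustion: choose finite subsets $V_n\uparrow V$ and solve the finite-dimensional linear system $\Delta u_n + u_n = 0$ on $V_n$ with $u_n = v$ on $V\setminus V_n$. A discrete maximum principle for $\Delta + I$ then yields $v\le u_n\le u_{n+1}\le \alpha$, and the monotone pointwise limit $u_\infty$ is bounded, non-negative, nonzero, and $1$-harmonic; Theorem \ref{thm-KL} concludes.

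The main obstacle is the Perron step. Unique solvability of the finite Dirichlet problem rests on strict positivity of the quadratic form $\eee(f,f)+\|f\|^2$ on functions supported in $V_n$, and the discrete maximum principle for $\Delta+I$ follows from the observation that at an interior minimum $x_0$ of a subsolution $h$ one has $\Delta h(x_0)\le 0$, so $\Delta h(x_0)+h(x_0)\ge 0$ forces $h(x_0)\ge 0$. Once both are in hand, the monotonicity of $u_n$, the uniform bound by $\alpha$, and the passage to the limit in the linear equation are all routine.
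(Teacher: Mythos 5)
This theorem is not proved in the present paper at all: it is quoted from \cite{HXP-OY}, so there is no in-text argument to compare your proposal against, and I can only assess it on its own terms. On those terms it is correct in the locally finite setting assumed here, and the route through Theorem \ref{thm-KL} is the natural one. Your ``only if'' direction with $\alpha\le \lambda u^*/(1+\lambda)$ works, and the truncation $v=(u-(u^*-\alpha))_+$ does satisfy $\Delta v+v\le 0$ pointwise (on $\Omega_\alpha$ because $v\le\alpha$ and $\Delta v\le\Delta u\le-\alpha$, off $\Omega_\alpha$ because $v=0$ and $\Delta v\le 0$). The Perron step also goes through, and the details you label routine really are, but two of them deserve a sentence each: the comparison $v\le u_n\le\alpha$ uses that $u_n-v$ (resp.\ $\alpha-u_n$) is non-negative off the finite set $V_n$, so a negative infimum would be attained at a genuine global minimum inside $V_n$, where your minimum-principle computation applies; and the monotonicity $u_n\le u_{n+1}$ is cleanest if you first observe that each $u_n$ is itself a global subsolution, i.e.\ $\Delta u_n+u_n\le 0$ on all of $V$ (off $V_n$ one has $u_n=v$ while $u_n\ge v$ everywhere, hence $\Delta u_n\le\Delta v$ there), after which the same comparison argument applies on $V_{n+1}$ since $u_{n+1}=u_n=v$ outside $V_{n+1}$. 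Passing to the limit in $\Delta u_n(x)+u_n(x)=0$ is justified by local finiteness (each $\Delta u_n(x)$ is a finite sum) together with the uniform bound $u_n\le\alpha$, and $u_\infty\ge v$ ensures the limit is nonzero, so Theorem \ref{thm-KL} with $\lambda=1$ gives stochastic incompleteness. The one caveat is that your argument leans on local finiteness (pointwise definition of $\Delta$, finite linear systems); this matches the standing assumptions of this paper, but, as the remark after the theorem notes, \cite{HXP-OY} obtains the statement without local finiteness by handling the domain of the formal Laplacian more carefully, so your proof recovers the result in the generality needed here rather than in the full generality of the cited source.
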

\begin{rem}
  \rm{Theorems \ref{thm-fot} and \ref{thm-weak-OY-graph} can be applied to prove certain stability results, cf. \cite{Gri-H-Mas} for the former, and \cite{HXP-OY} for the later.
Indeed in Theorem \ref{thm-weak-OY-graph}, we do not need locally finiteness of the graph, if we treat the domain of the formal Laplacian more carefully (see \cite{HXP-OY} for details).
  }
\end{rem}

The full analogue of Theorem \ref{thm-gri-vol}, however is obtained only very recently by Folz \cite{FOLZSC}. There are two major difficulties to obtain this type of volume growth criterion for stochastic completeness of weighted graphs. The first one is that the graph metric is not suitable to get a meaningful volume growth criterion. This issue was first shown by Wojciechowski \cite{WOJ-Survey} through his example of ``anti-trees". Similar to the general strongly local case, we need a metric that plays the role of the intrinsic metric.

\begin{defi}
  \label{defi-adapted-distance-new}
  We call a metric $d$ on
a simple weighted graph $(V, \omega,\mu)$ weakly adapted if for some $c_0>0$,
\begin{equation}\label{eq-adapted-condition}
\lap \lp d(x,y)\wedge c_0\rp^2\leq 1
\end{equation}
for every $x\in V$. Such a metric called adapted if in addition it satisfies
\[\omega(x,y)>0\Rightarrow d(x,y)\le c_0.\]
\end{defi}

\begin{rem}
  \rm{
   Note that (\ref{eq-adapted-condition}) is an analogue of (\ref{eq-intrinsic}). 
   There always exist an adapted metric (non-unique in general) on a simple weighted graph. For a more detailed study of adapted metrics in relation with essential self-adjointness, we refer to \cite{HKMW}. Many examples are given there. 

 A direct adaption of the definition of intrinsic metric in the strongly local case does not work, as discussed in \cite{FOLZSC}. In \cite{Huang-unique} and \cite{Gri-H-Mas} the notion of adapted metrics, was applied to obtain some weaker volume growth criterion. These works are inspired by the intrinsic metric for general Dirichlet forms developed by Frank, Lenz and Wingert \cite{FLW}, as well as the work of Masamune and Uemura \cite{MU}. See also Folz \cite{Folz} for related independent work in the context of heat kernel estimates on weighted graphs.
 }
\end{rem}

The second difficulty is that in the discrete setting, there is no analogue of the chain rule. So the geometric analytic methods applied in \cite{Huang-unique} only yields a volume growth criterion with respect to an adapted metric of the type $\mu\lp B(x_0, r)\rp\le \exp\lp c r\log r\rp$ with $0<c<\frac{1}{2}$. A similar volume growth criterion was shown in \cite{Gri-H-Mas} for general jump type Dirichlet forms. See Masamune, Uemura and Wang \cite{MUW} and Shiozawa \cite{Shiozawa} for improvements and further developments.

Folz \cite{FOLZSC} made the breakthrough by utilizing the so-called metric graphs. Roughly speaking, metric graphs are graphs enriched with intervals attached to edges, offering a continuous version of weighted graphs. More details will be given soon in the next subsection.
Folz's result can be stated as follows: 
\begin{thm}[Folz]
  \label{thm-Folz} Let $\g$ be a simple weighted graph. Let $d$ be a weakly
  adapted metric such that all closed metric balls $B_d (x, r)$ are finite.
  If the volume growth with respect to $d$ satisfies:
  \begin{equation}
\label{eq-vol-graph}
\int^{\infty}\frac{rdr}{\log \lp\mu\lp B_d(x_0, r)\rp\rp}=\infty,
\end{equation}
for some reference point $x_0\in V$, then the corresponding Dirichlet form $(\eee, \fff)$ is stochastically complete.
\end{thm}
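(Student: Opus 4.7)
The plan is to reduce Theorem~\ref{thm-Folz} to the strongly local setting, where Sturm's theorem already provides a volume growth criterion. Given the weighted graph $\g$ and the weakly adapted metric $d$, I would associate a metric graph $X_M$: replace each edge $(x,y)\in E$ by an interval $I_{xy}$ of length $\ell_{xy}:=d(x,y)\wedge c_0$, glued at the vertices. Equip $X_M$ with a Radon measure $\mu^M$ and a natural strongly local regular Dirichlet form $(\eee^M,\fff^M)$ chosen so that (i) the intrinsic metric $\rho$ on $X_M$ coincides with $d$ on $V$ up to the $c_0$-truncation, (ii) Assumption~\ref{assm-basic} holds, and (iii) the $\mu^M$-mass of $B_\rho(x_0,r)$ is controlled by $\mu\bigl(B_d(x_0,r)\bigr)$ up to a uniform multiplicative constant. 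The weakly adapted condition \eqref{eq-adapted-condition} is exactly what ensures (i)--(ii), and (iii) follows from $\ell_{xy}\le c_0$. Consequently \eqref{eq-vol-graph} implies Sturm's condition \eqref{eq-vol} for $X_M$, and $X_M$ is stochastically complete by Sturm's theorem.

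The heart of the argument is then transferring stochastic completeness from $X_M$ back to $\g$. I would proceed by contradiction via the weak Omori-Yau principle. Suppose $\g$ is stochastically incomplete; by Theorem~\ref{thm-weak-OY-graph} there is a bounded $u\colon V\to\RR$ with $u^*=\sup u<\infty$ and some $\alpha>0$ such that $\Delta u(x)\le-\alpha$ on $\Omega_\alpha=\{u>u^*-\alpha\}$. Extend $u$ linearly along each interval $I_{xy}$ to a continuous $\tilde u\in\fff^M_{\func{loc}}\cap L^\infty$. Pairing $\tilde u$ with a non-negative test function $\phi\in\fff^M_{b,c}$ supported in the super-level set $\{\tilde u>u^*-\alpha'\}$ (with $\alpha'<\alpha$ suitably shrunk) and integrating by parts edge-by-edge, I expect the vertex flux contribution at each $x\in\Omega_{\alpha'}$ to reproduce $-\mu(x)\Delta u(x)\ge \alpha\mu(x)$, leading—after absorbing the edge remainder terms—to the WOYMP-violating inequality of Proposition~\ref{prop-weak-OY} for $\tilde u$ on $X_M$. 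This contradicts the stochastic completeness of $X_M$ just established, proving Theorem~\ref{thm-Folz}.

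The main obstacle is the edge remainder in the integration-by-parts step: an interval $I_{xy}$ joining $x\in\Omega_{\alpha'}$ to $y\notin\Omega_{\alpha'}$ still supports $\phi$ on part of its length, and the linear interpolation $\tilde u$ contributes a non-trivial flux there whose sign is not automatically favorable. The fix should come from choosing $\alpha'$ strictly smaller than $\alpha$ and exploiting the bound $\ell_{xy}\le c_0$ from \eqref{eq-adapted-condition} to control the edge terms by a constant strictly less than the vertex gain $\alpha$, leaving a strictly positive residual $\delta$. A more direct alternative—hinted at as the ``second approach'' in the abstract—would instead transplant a cutoff sequence $\tilde v_n\in\fff^M$ from $X_M$ to $V$ by restriction and bound $\eee(v_n,w)$ edge-by-edge by $\eee^M(\tilde v_n,\tilde w)$ for a piecewise-linear extension $\tilde w$ of any $w\in\fff\cap L^1$, directly verifying the Fukushima-Oshima-Takeda criterion (Theorem~\ref{thm-fot}) for $\g$ without any super-level-set analysis.
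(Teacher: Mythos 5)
Your reduction step and your main transfer argument both have genuine gaps. First, building the metric graph with truncated edge lengths $\ell_{xy}=d(x,y)\wedge c_0$ does not give the comparisons you claim: an edge with $d(x,y)>c_0$ becomes an interval of length $c_0$, so the path metric on $X_M$ satisfies $\rho(x,y)\le c_0<d(x,y)$, the inequality $d\le\rho$ on $V$ fails, and a $\rho$-ball can swallow vertices (and the mass of their incident edges) that lie far outside $B_d(x_0,r)$; thus neither (i) nor (iii) holds as stated. The correct reduction is different: delete the long edges, i.e.\ pass to $\omega'(x,y)=\omega(x,y)\mathbf{1}_{\{d(x,y)\le c_0\}}$, invoke the stability theorem for jump processes of \cite{Gri-H-Mas} (plus Theorem \ref{thm-KL} to handle possible disconnection) to see that $\g$ and $(V,\omega',\mu)$ are simultaneously stochastically complete, and only then build the metric graph for the now genuinely adapted metric with $l(e)=d(x,y)$, $p(e)=q(e)=\omega(x,y)d(x,y)$.

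Second, the linear interpolation $\tilde u$ cannot serve as a WOYMP-violating function on the metric graph, and the obstruction is not the boundary edges you worry about but the edge interiors inside the super-level set. Proposition \ref{prop-weak-OY} demands $\eee^M(\tilde u,\phi)\le-\delta\int_{\hat\Omega}\phi\,d\hat{\mu}$ for \emph{all} admissible $\phi$ supported in $\overline{\hat\Omega}$; taking an edge with $u(x)=u(y)>u^*-\alpha$ and $\phi$ supported in its interior gives $\eee^M(\tilde u,\phi)=0$ (since $\tilde u''=0$ there) while the right-hand side is strictly negative, so no choice of $\alpha'<\alpha$ or use of $\ell_{xy}\le c_0$ can rescue this. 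The fix in my proof is to solve the Dirichlet problem $v''=1$ (a Dirichlet-to-Neumann step) on every edge meeting $\Omega_1$: integration by parts then produces the interior term $-\int_{\hat\Omega}\phi\,d\hat{\mu}$ exactly, at the price of boundary derivatives $\frac{v(l(e))-v(0)}{l(e)}\pm\frac{1}{2}l(e)$, and the extra $\frac{1}{2}\sum_y\omega(x,y)d^2(x,y)\le\frac{1}{2}\mu(x)$ is absorbed by $\Delta u(x)\le-1$, leaving $-1+\frac12\le0$ at each vertex. Your ``alternative'' via Theorem \ref{thm-fot} is indeed my second proof in outline, but note it is not a free shortcut: since $V$ is $\hat{\mu}$-null, the a.e.\ convergence $\hat v_n\to1$ says nothing about $v_n=\hat v_n\vert_V$, and one must prove pointwise convergence at vertices by testing against hat functions, prove $v_n\in\fff$ (via $\fff=\fff_{\max}$ from \cite{HKMW} and a Sobolev trace inequality, after normalizing $\mu(x)=\sum_y\omega(x,y)d^2(x,y)$ by Lemma \ref{lem-change-mu}), and check $\hat w\in L^1\cap\hat{\fff}$ for the piecewise-linear extension.
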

\begin{rem}
  For the finiteness of metric balls in $d$, a typical sufficient condition is that all balls have finite measure and there is a positive lower bound for the function $\mu$. This is the assumption adopted in \cite{Huang-unique}.
\end{rem}

We end this subsection with an important remark about weakly adapted metrics. For a simple weighted graph $\g$ with a weakly adapted metric $d$, we can define a new weighted graph $(V,\omega',\mu)$ by
\[\omega'(x,y)=\omega(x,y)\text{~~~~if~~~~}d(x,y)\le c_0; ~~\omega'(x,y)=0\text{~~~~if~~~~}d(x,y)> c_0.\] 
It is direct to see that $d$ is an adapted metric for $(V,\omega',\mu)$. Also note that both weighted graphs share the same metric measure space structure $(V, d,\mu)$. By a stability result for general jump processes in \cite{Gri-H-Mas} (Theorem 2.2), if all metric balls in $(V,d)$ are finite, then $\g$ is stochastically complete if and only if $(V,\omega',\mu)$ is. The only subtle issue is that the new weighted graph $(V,\omega',\mu)$ may be not connected. By Theorem \ref{thm-KL}, it is stochastically complete if and only if each connected component is. If the volume growth condition (\ref{eq-vol-graph}) holds for $\g$, it obviously holds for each of the connected components of $(V,\omega',\mu)$.
Thus, to prove Theorem \ref{thm-Folz}, it suffices to work with the case that $d$ on $\g$ is an adapted metric. Later on, we only work with adapted metrics.

\subsection{Metric graphs}
Here we collect some basic facts that we will apply later. For more complete expositions of the theory of metric graphs (or quantum graphs), we refer to the surveys of Kuchment \cite{KuchSurveyI, KuchSurveyII}. See also the work of Haeseler \cite{Hae-heat-kernel}.
For the theory of Sobolev spaces involved, we closely follow Brezis \cite{BREbook}. We do not claim any originality for the results of this subsection. However, some care has to be taken since our setting does not strictly fit the framework in \cite{KuchSurveyI, KuchSurveyII}.

Let $\ve$ be a simple graph as before. We enrich the graph $\ve$ with the following data:
\begin{enumerate}
  \item a map $\tau:E\rightarrow\{1,-1\}$, satisfying $\tau\lp (x,y)\rp=-\tau \lp (y,x)\rp$ for all $(x,y)\in E$, $E_+:=\tau^{-1}(\{1\})$;
  \item three positive functions $l$, $p$, $q$ defined on $E_+$;
  \item a family of marked intervals $\{I(e)\}_{e\in E_+}$, where $I(e)=[0, l(e)]\times\{e\}$.
\end{enumerate}
The map $\tau$ in $(1)$ above defines an orientation on $\ve$. Denote $E_+(x)=\{(x,y): (x,y)\in E_+\}$, $E_-(x)=\{(y,x): (x,y)\in E_+\}$ and $E_x=E_+(x)\cup E_-(x)$. 
For $e=(x,y)\in E_+$, we call $x$ the source $s(e)$ of $e$ and $y$ the target $t(e)$. The metric graph $X$ is then given by the quotient of the disjoint union
$\bigsqcup_{e\in E_+ }I(e)$
through the map $\pi$ such that for each $x\in V$, those marked points $0_e$ with $e\in E_+(x)$ and $l(e)_e$ with $e\in E_-(x)$ are identified. 


The vertex set $V$ can be viewed as a subset of $X$ through $\pi$. Sometimes, we do not distinguish a marked interval with its image under $\pi$, as a subset of the metric graph $X$, and simply call it an edge of the metric graph.

 We equip the marked intervals with the natural Euclidean metric on them, as well as the measure $q(e) dm(e)$ for $e\in E_+ $ where $dm(e)$ is the Lebesgue measure on $[0, l(e)]$. The topology on the metric graph $X$ is the natural quotient topology. There is also a natural shortest path metric $d_{l}$ on it, that is, the quotient metric induced by the marked intervals under $\pi$. 
 The natural quotient measure $\mu$ on $X$ is defined as the push-forward measure of $\oplus_{e\in E_+}q(e)m(e)$ under the map $\pi$.

On a marked interval $I(e)$, we equip the Sobolev space $W^{1,2}(e)=W^{1,2}((0, l(e)))$ with the norm
  \[\parallel f\parallel_{W^{1,2}(e)}^2=q(e)\int_{0}^{l(e)}f^2 dm(e)+ p(e)\int_{0}^{l(e)}(f')^2 dm(e).\]
Note that $f'$ denotes the weak derivative of $f\in W^{1,2}((0, l(e))) $ that is defined almost everywhere. Later on, for a function in $W^{1,2}(e)$, we always mean its continuous representative on $[0, l(e)]$. Consider the Sobolev space   
\[W^{1,2}(X)=\{u\in C(X)\cap \bigoplus_{e\in E_+}W^{1,2}(e): \sum_{e\in E_+}\parallel u\vert_{I(e)}\parallel_{W^{1,2}(e)}^2<\infty\}.\]
A natural bilinear form $\eee$ can then be defined on $W^{1,2}(X)$ as
\[\eee(u,u)=\sum_{e\in E_+}p(e)\int_{0}^{l(e)}(u'\vert_{I(e)})^2 dm(e).\] 

To go further, we need make the following basic topological assumption.
\begin{assm}
  \label{assm-metric-graph}All closed balls in $(V ,d_{l})$ (as a subspace of $(X, d_{l})$) are finite. 
\end{assm}
\begin{rem}
  \rm{Note the similarity with the topological assumption in Theorem \ref{thm-Folz}. Later in Lemma \ref{lem-comparison}, we will see the relation between these two assumptions.
  }
\end{rem}
Since $(V, E)$ is locally finite, Assumption \ref{assm-metric-graph} implies that each metric ball has nonempty intersection with only finitely many marked intervals. By Sobolev embedding (cf. Theorem 8.8 in \cite{BREbook}), we see that a Cauchy sequence in $W^{1,2}(X)$ converges uniformly on each compact subset of $X$, and thus the limit is again a continuous function. This observation leads to the fact that $(\eee, W^{1,2}(X))$ is a closed Dirichlet form. We do not know how to show this without Assumption \ref{assm-metric-graph}. 

Let $C_{\func{Lip}, c} (X)=C_{\func{Lip}}(X)\cap C_c(X)$ be the space of compactly supported Lipschitz functions on $(X, d_{l})$. By Assumption \ref{assm-metric-graph}, such a function is $\mu$-a.e. differentiable and the derivative is essentially bounded. Thus we have $C_{\func{Lip}, c} (X)\subseteq W^{1,2}(X)$.
We can then define the space $\fff$ as the closure of $C_{\func{Lip}, c} (X)$ with respect to $\eee_1$-norm. The form $(\eee, \fff)$, as a restriction of $(\eee, W^{1,2}(X))$, is then a regular Dirchlet form (note that $C_{\func{Lip}, c} (X)$ is dense in $C_c(X)$ with uniform norm). From the topology, we also see that the space $C_{\func{Lip}, \func{loc}} (X)$ of locally Lipschitz functions is a subspace of $\fff_{\func{loc}}$. Indeed, for a locally Lipschitz function $u$ and two relative open sets $G, H$ with $\overline{G}\subset H\subset X$, we can always extend $u\vert_G$ by linear functions to get a Lipschitz function supported in $\overline{H}$, since only finitely many marked intervals are involved in this procedure.  

Everything well fits into the framework of strongly local Dirichlet forms. For simplicity, we now assume that $p(e)=q(e)$ for all $e\in E_+$ (in our application this particular choice holds). The energy measure $\Gamma$ in this setting has the simple form
\[d\Gamma (u, u)\vert_{I(e)}=p(e)(u'\vert_{I(e)})^2 dm(e),\]
for each $I(e)$ and $u\in \fff_{\func{loc}}$. The condition $d\Gamma (u, u)\le d\mu$ then simply reads $\vert u'\vert \le 1$ for $u\in \fff_{\func{loc}}$, and such a function $u$ is in $\fff_{\func{loc}}\cap C_{\func{Lip}}(X)=C_{\func{Lip}}(X)$.
The intrinsic metric $\rho$ can then be directly shown to coincide with $d_{l}$ \cite{FOLZSC}. 

Let $\lp V, \omega, \mu\rp$ be a simple weighted graph and $d$ an adapted metric on $V$ such that all metric balls in $(V, d)$ are finite.
We can construct a metric graph $X$ using the data $\g$ and $d$, by setting
the functions $l, p, q$ to be
 \[l(e)=d(x,y), p(e)=q(e)=\omega(x,y)d(x,y),\] 
for $e=(x,y)\in E_+$. 

The proof of Theorem \ref{thm-Folz} can then be split into two steps.
First, 
the volume growth of the metric graph $X$ with respect to $d_l$ is no larger than that of the weighted graph $\g$ with respect to $d$. This will be explicitly formulated as Lemma \ref{lem-comparison}.

The second step is a comparison of stochastic completeness of $X$ and $\g$.
\begin{prop}\label{prop-comparison}
  Let $\lp V, \omega, \mu\rp$ be a simple weighted graph and $d$ an adapted metric on $V$ such that all metric balls in $(V, d)$ are finite. Let $X$ be the associated metric graph constructed as above. If $X$ is stochastically complete, then so is the weighted graph $\g$.
\end{prop}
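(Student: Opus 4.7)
The plan is to prove Proposition \ref{prop-comparison} by contraposition: assuming $\g$ is stochastically incomplete, produce a function on $X$ fulfilling the weak Omori--Yau condition of Proposition \ref{prop-weak-OY}, forcing $X$ to be stochastically incomplete. By Theorem \ref{thm-weak-OY-graph}, stochastic incompleteness of $\g$ yields a nonnegative bounded $u\colon V \to \RR$ with $u^* := \sup u \in (0, \infty)$ and an $\alpha > 0$ so that $\Delta u(x) \le -\alpha$ for every $x \in \Omega_\alpha^V := \{x \in V : u(x) > u^* - \alpha\}$. The central idea is to lift $u$ to $X$ by a Dirichlet-to-Neumann construction on each edge.

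Fix a small $c > 0$ (to be chosen last) and, for each $e = (x,y) \in E_+$, define $v$ on $I(e) = [0, l(e)]\times\{e\}$ as the unique solution of
\[
-v'' + c v = 0, \qquad v(0) = u(x), \qquad v(l(e)) = u(y).
\]
Then $v(t) = u(x)\sinh(\sqrt{c}(l(e)-t))/\sinh(\sqrt{c}l(e)) + u(y)\sinh(\sqrt{c}\,t)/\sinh(\sqrt{c}l(e))$ is a convex combination of the boundary values with nonnegative coefficients summing to at most $1$, so $0\le v\le u^*$, the gluings at vertices match, $v$ is continuous and locally Lipschitz on $X$, and $\sup v = u^*$. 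In particular $v \in \fff_{\func{loc}}\cap L^\infty(X,\mu)$ with $\func{esssup} v = u^* > 0$.

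For any $\phi\in\fff_{b,c}$, integration by parts on each interval (using $p(e) = q(e)$) decomposes
\[
\eee(v,\phi) \;=\; \sum_{z\in V}\phi(z)\,B(z)\;-\;c\int_X v\,\phi\,d\mu,
\]
where $B(z) = \sum_{y\sim z}\omega(z,y)\,\frac{\theta_{z,y}}{\sinh\theta_{z,y}}\,\bigl[u(z)\cosh\theta_{z,y} - u(y)\bigr]$ with $\theta_{z,y} = \sqrt{c}\,d(z,y)$. Because $d$ is adapted we have $\theta_{z,y}\le \sqrt{c}\,c_0$ uniformly, and Taylor expansion around $\theta = 0$ gives $B(z) = \mu(z)\Delta u(z) + c R(z) + O(c^2\mu(z))$, with $R(z)$ a bounded linear combination of $u(z),u(y)$ weighted by $\omega(z,y)d(z,y)^2$. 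The adapted inequality $\sum_{y\sim z}\omega(z,y)d(z,y)^2 \le \mu(z)$ then yields $|R(z)|\le C u^* \mu(z)$ for an absolute constant $C$, and similarly for the $O(c^2)$ remainder.

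To finish, restrict to nonnegative $\phi$ supported in $\overline{\Omega_\beta^X}$ for some $0<\beta<\alpha$, where $\Omega_\beta^X = \{p\in X : v(p) > u^*-\beta\}$. Any vertex $z$ with $\phi(z) > 0$ lies in $\overline{\Omega_\beta^X}$, so $u(z) = v(z) \ge u^*-\beta > u^*-\alpha$, hence $z\in\Omega_\alpha^V$ and $\Delta u(z)\le -\alpha$. Choosing $c$ small enough that $cC u^* + O(c^2) \le \alpha/2$ makes $\sum_{z}\phi(z)B(z) \le -(\alpha/2)\sum_z \mu(z)\phi(z)\le 0$, while $v\ge u^*-\beta$ on $\Omega_\beta^X$ bounds the bulk by $-c(u^*-\beta)\int_{\Omega_\beta^X}\phi\,d\mu$. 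Setting $\delta = c(u^*-\beta)$ verifies the hypothesis of Proposition \ref{prop-weak-OY}, contradicting the stochastic completeness of $X$. The main obstacle is the uniform Taylor estimation of $B(z)$: one must show that solving $-v''+cv = 0$ on each edge reproduces the graph Laplacian $\mu(z)\Delta u(z)$ to leading order with an error that the adapted-metric bounds $d(x,y)\le c_0$ and $\sum_{y\sim z}\omega(z,y)d(z,y)^2\le \mu(z)$ can absorb, while the bulk $-c\int v\phi\,d\mu$ supplies the strictly negative contribution that closes the weak Omori--Yau inequality.
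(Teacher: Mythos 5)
Your proposal is correct in substance and follows the same overall architecture as the paper's first proof of Proposition \ref{prop-comparison}: argue by contraposition, take a WOYMP-violating $u$ on $V$ from Theorem \ref{thm-weak-OY-graph}, lift it edge-by-edge to the metric graph by a Dirichlet-to-Neumann construction, integrate by parts against $\phi\in\fff_{b,c}$, and feed the resulting inequality into Proposition \ref{prop-weak-OY}. The difference is the choice of edge ODE. The paper solves $v''=1$ on every edge meeting the superlevel set (linear interpolation elsewhere, after normalizing $\alpha=1$, $u^*=2$), so the bulk term is exactly $-\int_{\hat{\Omega}_1}\phi\, d\hat{\mu}$ and the vertex contribution at $x$ is exactly $\phi(x)\lp \sum_y\omega(x,y)(u(x)-u(y))+\frac12\sum_y\omega(x,y)d^2(x,y)\rp\le \phi(x)\mu(x)\lp-1+\frac12\rp$, so adaptedness closes the estimate with no expansions or small parameters. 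You instead solve $-v''+cv=0$ on all edges, which is closer in spirit to a Khas'minskii-type ($\lambda$-harmonic) transfer; your boundary-term formula $B(z)=\sum_{y\sim z}\omega(z,y)\frac{\theta_{z,y}}{\sinh\theta_{z,y}}\lp u(z)\cosh\theta_{z,y}-u(y)\rp$ and the identity $\eee(v,\phi)=\sum_z\phi(z)B(z)-c\int_X v\phi\, d\hat{\mu}$ are correct, and since adaptedness gives both $\theta_{z,y}\le\sqrt{c}\,c_0$ and $\sum_{y\sim z}\omega(z,y)d^2(z,y)\le\mu(z)$, the Taylor remainder is indeed uniformly of size $O(c)\,u^*\mu(z)$ and can be absorbed for small $c$; the price is this extra uniform-remainder bookkeeping, while the paper's choice $v''=1$ makes the same cancellation exact. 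One small point to patch: you need $\delta=c(u^*-\beta)>0$, i.e.\ $\beta<u^*$, which is not guaranteed by $\beta<\alpha$ alone; fix it by noting that one may assume $\alpha<u^*$ without loss of generality (shrinking $\alpha$ shrinks $\Omega_\alpha^V$ and preserves $\Delta u\le-\alpha$ there), exactly as the analogous reduction is made in the proof of Proposition \ref{prop-weak-OY}. With that adjustment, your argument is a valid alternative to the paper's computation.
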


Two different proofs of Proposition \ref{prop-comparison} will be given in Section \ref{sect-proof-vol} and Section \ref{sect-another-proof}. 

For metric graphs, as we already mentioned, analogues of the weak Omori-Yau maximum principle and Khas'minskii type results are first due to Haeseler and Wojciechowski \cite{HaeWoj}. Another remark is that the Dirichlet form $(\eee, \fff)$ constructed here is always irreducible, due to the connectivity of the metric graph $X$.

\subsection{Summary and organization of the article}
In \cite{FOLZSC}, Folz first proposed the idea of relating stochastic completeness of weighted graphs to that of metric graphs with loops, which leads to his proof of Theorem \ref{thm-Folz}. Roughly speaking, for each weighted graph with an adapted metric on it, he constructed a corresponding metric graph, and then made comparison between the Markov chain on graph and diffusion on metric graph of the holding times and jumping probabilities. His comparison relies on some quite nontrivial probabilistic calculations. The stochastic completeness of the weighted graph then follows from that of the corresponding metric graph, where the volume growth criterion of Sturm applies.

In this article, we also associate a metric graph $X$ to a simple weighted graph $\g$ with an adapted metric $d$, and make a reduction to Sturm's volume growth criterion. However, we develop two purely analytic approaches to prove the key comparison result Proposition \ref{prop-comparison}. The first approach is based on the weak Omori-Yau type maximum principles: Theorem \ref{thm-weak-OY-graph} for weighted graphs and Proposition \ref{prop-weak-OY} for metric graphs. When $\g$ is stochastically incomplete with a WOYMP-violating function $u$, we construct a WOYMP-violating function $v$ on $X$ using $u$, showing the stochastic incompleteness of $X$. The functions $u$ and $v$ are related through the Dirichlet to Neumann problem for intervals.
The second approach is based on the general criterion Theorem \ref{thm-fot}. Assuming the stochastic completeness of the metric graph $X$,
a candidate sequence $\{v_n\}$ as required in Theorem \ref{thm-fot} for the weighted graph $\g$ is naturally
given by the restriction of the sequence $\{\hat{v}_n\}$
 on $X$ to $V$. Checking that this sequence $\{v_n\}$ indeed works involves some simple but interesting calculations.
In some sense, the second proof is even simpler than the first one. However,
the application of Proposition \ref{prop-weak-OY} in the first approach seems more conceptual and the Dirichlet to Neumann problem
relating graphs and metric graphs
may be worth further developing.

We organize our paper as follows. In Section \ref{sect-weak-OY}, we give a proof of Proposition \ref{prop-weak-OY}. Section \ref{sect-metric-graph} is devoted to the comparison of volume growth of a weighted graph with the associated metric graph. We construct the WOYMP-violating function and complete the first analytic proof of Proposition \ref{prop-comparison}
in the Section \ref{sect-proof-vol}.  The second proof is presented in the last section.
















\section{Weak Omori-Yau maximum principle}\label{sect-weak-OY}
In this section we give a proof of Proposition \ref{prop-weak-OY}.
We start with some basic facts about a strongly local, regular Dirichlet form $\lp \eee, \fff \rp$ with energy measure $\Gamma$ on a metric space $\lp X, d\rp$. 

\begin{lem}[\cite{FOT}, the Leibniz rule and chain rule]\label{lem-prod-chain}
Let $u, v, w\in \fff_{b, \func{loc}}$ (with some quasi-continuous modifications chosen) and $f\in C^1 (\mathbb{R})$. Then we have 
\begin{align}
\label{eq-prod}
d\Gamma(uv, w)&=u d\Gamma(v, w)+v d\Gamma(u, w);\\
\label{eq-chain}
d\Gamma(f(u), v)&=f'(u) d\Gamma(u, v).
\end{align}
 \end{lem}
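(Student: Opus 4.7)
The plan is to prove both identities first for globally bounded functions $u, v, w \in \fff_b$ and then extend to $\fff_{b, \func{loc}}$ via strong locality. The basic tool throughout is the polarized version of the master formula (\ref{eq-def-Gamma}),
\[\int_X \phi \, d\Gamma(u, v) \;=\; \frac{1}{2}\bigl[\eee(u\phi, v) + \eee(v\phi, u) - \eee(uv, \phi)\bigr], \qquad \phi \in \fff \cap C_c(X),\]
together with the fact that $\fff_b$ is an algebra.

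For the Leibniz rule on $\fff_b$, I would substitute $uv$ into the master formula and then use the master formula a second time to rewrite each of the resulting $\eee$-terms as an integral against $d\Gamma(v, w)$ or $d\Gamma(u, w)$; the bookkeeping collapses to $\int u\phi \, d\Gamma(v, w) + \int v\phi \, d\Gamma(u, w)$, and since $\phi$ ranges over a dense subset of $C_c(X)$ this yields (\ref{eq-prod}). For the chain rule I would first verify it for monomials by induction on the degree using the Leibniz rule just established, and then extend to polynomials by linearity. For a general $f \in C^1(\RR)$, since $u$ is essentially bounded, pick $M$ so that $[-M, M]$ contains its essential range; by Weierstrass approximation there are polynomials $p_n$ with $p_n \to f$ and $p_n' \to f'$ uniformly on $[-M, M]$. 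A standard estimate then gives $p_n(u) \to f(u)$ in $\eee_1$-norm and the signed measures $p_n'(u) \, d\Gamma(u, v)$ converge to $f'(u) \, d\Gamma(u, v)$ in total variation on compact sets, so (\ref{eq-chain}) passes to the limit.

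The remaining task is localization. Given $\phi \in \fff \cap C_c(X)$, pick a relatively compact open neighborhood $G$ of $\supp \phi$; by definition of $\fff_{b, \func{loc}}$ there exist $\tilde u, \tilde v, \tilde w \in \fff$ coinciding $\mu$-a.e.\ with $u, v, w$ on $G$, which I truncate to elements of $\fff_b$ without changing their values on $G$. By strong locality the measures $\Gamma(u, w)$, $\Gamma(uv, w)$, and $\Gamma(f(u), v)$ agree on $G$ with their tilde counterparts, so the global identities on $\fff_b$ transfer to the claimed local ones when tested against $\phi$. I expect the main obstacle to be the clean formulation of this last step, namely verifying that the bilinear extension of $\Gamma$ to $\fff_{\func{loc}} \times \fff_{\func{loc}}$ retains the locality property simultaneously for all of the compound objects $uv$ and $f(u)$; this is precisely the content of the construction recalled in Section 3.2 of \cite{FOT}, to which the argument ultimately reduces.
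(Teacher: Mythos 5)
The central step of your argument --- deriving the Leibniz rule on $\fff_b$ by substituting $uv$ into the polarized form of (\ref{eq-def-Gamma}) and letting ``the bookkeeping collapse'' --- does not go through, and this is exactly where the real content of the lemma sits. The defining identity (\ref{eq-def-Gamma}) holds for \emph{every} regular Dirichlet form, including pure jump forms, for which the Leibniz rule is false; so no purely algebraic manipulation of it can produce (\ref{eq-prod}). Concretely, writing both sides of $\int \phi\, d\Gamma(uv,w)=\int u\phi\, d\Gamma(v,w)+\int v\phi\, d\Gamma(u,w)$ with the polarized formula, the collapse you need is precisely
\[
\eee(uv\phi,w)+\eee(uw\phi,v)+\eee(vw\phi,u)+\eee(uvw,\phi)=\eee(uv,w\phi)+\eee(vw,u\phi)+\eee(uw,v\phi),
\]
which is the derivation property in disguise: on the two-point space with $\eee(f,g)=(f(a)-f(b))(g(a)-g(b))$ and $u=v=w=\phi=\mathbf{1}_{\{a\}}$ the left side is $4$ and the right side is $3$. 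Hence strong locality must be used in an essential way already at this step, and your sketch never uses it before the final localization. In \cite{FOT} (Theorem 3.2.2 for the chain rule, Lemma 3.2.5 for the derivation property) this is done via the approximating forms $\eee^{(\beta)}(u,v)=\beta\< u-\beta G_\beta u, v\>$ and the fact that, for strongly local forms, the associated approximating measures concentrate near the diagonal, which is what licenses the product/Taylor expansion; nothing replacing that analysis appears in your proposal. A secondary, related point: you apply the master formula with test functions such as $u\phi\in\fff_{b,c}$, which need not be continuous; that extension is exactly Corollary \ref{cor-formula} of the paper, which is there \emph{deduced from} the Leibniz rule, so you must establish it independently (by approximation in $\eee_1$) to avoid circularity.

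The remaining architecture is reasonable modulo this gap: once (\ref{eq-prod}) is available on $\fff_b$, the chain rule for polynomials by induction, the passage to $f\in C^1$ via Weierstrass approximation (using the normal contraction property to get $p_n(u)\to f(u)$ in $\eee_1$ and the Cauchy--Schwarz inequality for energy measures to pass to the limit in $p_n'(u)\,d\Gamma(u,v)$), and the localization to $\fff_{b,\func{loc}}$ by strong locality are all standard. Note also that the paper itself offers no proof of this lemma --- it is quoted directly from \cite{FOT} --- so the value of writing one out lies precisely in the step your argument elides.
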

An immediate consequence of the Leibniz rule (\ref{eq-prod}) is the following extension of (\ref{eq-def-Gamma}) implicitly contained in \cite{FOT}.
\begin{cor}[\cite{FOT}]
\label{cor-formula}
For all $u, v, w\in \fff_{b}$, 
  \begin{equation}
\label{eq-Gamma-eee}
\int_X w d\Gamma (u,v)=\frac{1}{2}\lp \eee(uw, v)+\eee(u, vw)-\eee(uv, w)\rp.
\end{equation}
\end{cor}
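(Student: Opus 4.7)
The plan is to derive (\ref{eq-Gamma-eee}) by a purely algebraic manipulation of the Leibniz rule (\ref{eq-prod}) in Lemma~\ref{lem-prod-chain}, combined with the identity $\eee(f,g) = \Gamma(f,g)(X)$ for $f,g \in \fff_b$ mentioned just after (\ref{eq-def-Gamma}). Since $\fff_b$ is an algebra, for $u,v,w \in \fff_b$ the products $uw$, $vw$, $uv$ all lie in $\fff_b \subset \fff_{b,\func{loc}}$, so the Leibniz rule applies to each of them and yields the three measure identities
\begin{align*}
d\Gamma(uw, v) &= u\, d\Gamma(w, v) + w\, d\Gamma(u, v),\\
d\Gamma(u, vw) &= v\, d\Gamma(u, w) + w\, d\Gamma(u, v),\\
d\Gamma(uv, w) &= u\, d\Gamma(v, w) + v\, d\Gamma(u, w).
\end{align*}
For the second identity one first rewrites $d\Gamma(u,vw) = d\Gamma(vw,u)$ using symmetry of $\Gamma$, then applies (\ref{eq-prod}).

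Next I would add the first two identities and subtract the third. Using symmetry $\Gamma(w,v) = \Gamma(v,w)$, the cross terms $u\, d\Gamma(v,w)$ and $v\, d\Gamma(u,w)$ cancel, leaving the key single-line identity
$$d\Gamma(uw, v) + d\Gamma(u, vw) - d\Gamma(uv, w) = 2\, w\, d\Gamma(u, v).$$
Integrating both sides over $X$ and rewriting each of the three total masses on the left using $\eee(f,g) = \Gamma(f,g)(X)$ — which is permissible because all of $uw,v,u,vw,uv,w$ are in $\fff_b$ — converts this into (\ref{eq-Gamma-eee}) after division by $2$.

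I do not expect any substantive obstacle. The argument is a few lines of bookkeeping; the only points demanding a moment of care are (i) confirming that each pair appearing inside $\eee$ lands in $\fff_b \times \fff_b$ so that both the bilinear form and its $\Gamma$-total-mass representation are available, which is guaranteed by $\fff_b$ being an algebra, and (ii) invoking the symmetry of $\Gamma$ both to cast the middle identity in Leibniz form and to make the cross terms actually cancel in the combination above.
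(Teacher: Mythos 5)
Your argument is correct and is essentially the paper's own proof: both apply the Leibniz rule (\ref{eq-prod}) to the three permuted products $uw$, $vw$, $uv$, use symmetry of $\Gamma$, and combine the resulting identities with $\eee(f,g)=\Gamma(f,g)(X)$ on $\fff_b$; the only cosmetic difference is that you cancel the cross terms at the level of measures before integrating, while the paper integrates each identity first.
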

\begin{proof}
  For $w\in \fff\cap C_c(X)$, this is simply the polarization of (\ref{eq-def-Gamma}). For the general case, consider permutations of (\ref{eq-prod}), we have
  \begin{align*}
\eee(uv, w)=\int_X d\Gamma(uv, w)&=\int_X u d\Gamma(v, w)+\int_X v d\Gamma(u, w),\\
\eee(uw, v)=\int_X d\Gamma(uw, v)&=\int_X u d\Gamma(v, w)+\int_X w d\Gamma(u, v),\\
\eee(u, vw)=\int_X d\Gamma(vw, u)&=\int_X v d\Gamma(u, w)+\int_X w d\Gamma(u, v),
\end{align*}
and the assertion follows easily.
\end{proof}
\begin{lem}\label{lem-technical-oy}
Let $u\in \fff_{b}$ (with some quasi-continuous modification chosen), $\phi\in\fff_{b,c}$ and $f\in C^3 (\mathbb{R})$. Then we have
\[\eee(f(u),\phi)=\eee(u, f'(u)\phi)-\int_X f''(u)\phi d\Gamma(u,u).\]  
In particular, if $f''\ge 0$ on $\RR$ and $\phi\ge 0$, then
\[\eee(f(u),\phi)\le\eee(u, f'(u)\phi).\]
 \end{lem}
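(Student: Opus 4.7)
The plan is to derive this purely from the Leibniz rule (\ref{eq-prod}), the chain rule (\ref{eq-chain}), and the integrated identity $\eee(v,w)=\int_X d\Gamma(v,w)$. The key observation is that the two sides of the claimed identity correspond to two different ways of distributing the product $f'(u)\phi$ inside $\Gamma$, and their discrepancy is exactly the $f''$--term.

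First I would check that every object in sight lies in the right space so that (\ref{eq-prod}) and (\ref{eq-chain}) apply. Since $u\in\fff_b$ is bounded, say $|u|\le M$, and $f,f',f''$ are of class $C^1$ on $[-M,M]$, standard normal-contraction arguments give $f(u),f'(u)\in\fff_{b,\func{loc}}$. Combining this with $\phi\in\fff_{b,c}$, one obtains $f'(u)\phi\in\fff_{b,c}\subset\fff$, so the quantity $\eee(u,f'(u)\phi)$ is unambiguously defined, and $\eee(f(u),\phi)$ is defined through the strong-locality convention recalled in the remark after Definition~\ref{defi-subharmonic}.

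Now I would compute. By the chain rule (\ref{eq-chain}),
\[
\eee(f(u),\phi)=\int_X d\Gamma(f(u),\phi)=\int_X f'(u)\,d\Gamma(u,\phi).
\]
On the other hand, by the Leibniz rule (\ref{eq-prod}) applied to the product $f'(u)\cdot\phi$ in the second slot (using symmetry of $\Gamma$), followed by the chain rule for $d\Gamma(u,f'(u))$,
\[
d\Gamma(u,f'(u)\phi)=f'(u)\,d\Gamma(u,\phi)+\phi\,d\Gamma(u,f'(u))=f'(u)\,d\Gamma(u,\phi)+f''(u)\phi\,d\Gamma(u,u).
\]
Integrating over $X$ yields
\[
\eee(u,f'(u)\phi)=\int_X f'(u)\,d\Gamma(u,\phi)+\int_X f''(u)\phi\,d\Gamma(u,u)=\eee(f(u),\phi)+\int_X f''(u)\phi\,d\Gamma(u,u),
\]
and rearranging gives the asserted identity. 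The ``in particular'' inequality is then immediate: the energy measure $\Gamma(u,u)$ is positive, so $f''\ge 0$ together with $\phi\ge 0$ forces the second integral to be non-negative.

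The only step that might need care is the membership $f'(u)\phi\in\fff_{b,c}$ and the interpretation of $\eee(f(u),\phi)$ via strong locality; both are routine given the setup in Section~\ref{sect-weak-OY} but should be mentioned explicitly. The computational heart is a one-line Leibniz-plus-chain-rule manipulation, so I do not expect any serious obstacle.
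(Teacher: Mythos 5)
Your proof is correct and rests on the same ingredients as the paper's: the chain rule (\ref{eq-chain}) and the Leibniz rule (\ref{eq-prod}) for the energy measure, together with the identity $\eee(v,w)=\Gamma(v,w)(X)$ for $v,w\in\fff_b$, plus the domain checks that $f'(u)\phi\in\fff_{b,c}$ and that $\eee(f(u),\phi)$ is interpreted locally. The only organizational difference is that you integrate the measure-level Leibniz expansion of $d\Gamma(u,f'(u)\phi)$ directly, while the paper packages the same computation through the integrated identity (\ref{eq-Gamma-eee}) of Corollary \ref{cor-formula}, evaluating $\eee(f(u),\phi)$ and $\int_X f''(u)\phi\, d\Gamma(u,u)$ separately and summing; the mathematical content is identical.
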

\begin{proof}
  Note first that $f(u), f'(u), f''(u)\in \fff_{b}$ since $u\in \fff_{b}, f\in C^3 (\mathbb{R})$. 
 First, we have 
 \begin{align*}
\eee(f(u), \phi)&=\int_X d\Gamma(f(u),\phi)\\
&\overset{(\ref{eq-chain})}{=}\int_X f'(u) d\Gamma(u,\phi)\\
&\overset{(\ref{eq-Gamma-eee})}{=}\frac{1}{2}\lp \eee(f'(u)u, \phi)+ \eee(u, f'(u)\phi)-\eee(f'(u), u\phi)\rp.
\end{align*}
For the term involving $f''$, we have
\begin{align*}
\int_X f''(u)\phi d\Gamma(u,u)&\overset{(\ref{eq-chain})}{=}\int_X \phi d\Gamma(f'(u),u)\\
&\overset{(\ref{eq-Gamma-eee})}{=}\frac{1}{2}\lp \eee(u, f'(u)\phi)+\eee(f'(u), u\phi)-\eee(f'(u) u, \phi)\rp.
\end{align*}
The assertion follows by summing up the above two identities.
\end{proof}
\begin{rem}
  \rm{This is an argument similar to the proof of Lemma 2 in \cite{Sturm94}.
  }
\end{rem}
Now we have all the tools to prove Proposition \ref{prop-weak-OY}.
\begin{proof}[Proof of Proposition \ref{prop-weak-OY}]
By Sturm's result, it amounts to proving that there exists a nonzero non-negative bounded $\lambda$-subharmonic function $u$ (as in Definition \ref{defi-subharmonic}) if and only if there exists a WOYMP-violating function $v$ (as described in Proposition \ref{prop-weak-OY}).

The ``only if" part is easier. We simply prove that $v=u$ is a WOYMP-violating function. Since $u$ is non-negative, nonzero, bounded, we can let $\alpha=\frac{1}{2}\func{esssup} u>0$ and
$\delta=\frac{1}{2}\lambda\func{esssup} u>0$. The WOYMP-violating function satisfies
$u\ge\frac{1}{2}\func{esssup} u$, $\mu$-a.e. on  $\Omega_{\alpha}$.

Then for all non-negative $\phi\in\fff_c$ (no need to assume that $\func{supp} \phi\subseteq \overline{\Omega}_{\alpha}$), we have
\[\eee(u, \phi)\le -\lambda\int_X u\phi d\mu\le-\lambda\int_{\Omega_{\alpha}} u\phi d\mu\le -\delta\int_{\Omega_{\alpha}} \phi d\mu.\] 

Now we turn to the ``if" part. Without loss of generality, we can assume that $c=\func{esssup} v-\alpha>0$ ($\Omega_{\alpha}$ shrinks as $\alpha$ getting smaller). Define a function $f\in C(\RR)$ by $f(x)=\lp x-c\rp_+$. Let $u=f(v)$, which is in $\fff_{\func{loc}}\cap L^{\infty}$ by regularity and Markov property. 
Let $\lambda=\frac{\delta}{2\alpha}$. What we need show is that for all non-negative $\phi\in \fff_{b,c}$,
\[\eee(u, \phi)+\lambda\int_X u\phi d\mu=\eee(u, \phi)+\frac{\delta}{2\alpha}\int_X u\phi d\mu\le 0.\]
Note that $0\le u\le \alpha$ and $u=0$ on $\Omega_{\alpha}^c$, $\mu$-a.e., whence it suffices to prove
\[\eee(u, \phi)\le -\delta\int_{\Omega_{\alpha}}\phi d\mu.\]
We fix a non-negative $\phi\in \fff_{b,c}$. Choose a relatively compact open set $G_{\phi}\supset \func{supp}\phi$ and a function $\tilde{v}\in \fff_b$ that coincides with $v$ $\mu$-a.e. on $G_{\phi}$. It follows that the function $f(\tilde{v})$  coincides with $u=f(v)$ $\mu$-a.e. on $G_{\phi}$. Hence by the strong locality of the energy measure $\Gamma$,
\[\eee(\tilde{v}, \phi)=\eee(v, \phi);~~~~\eee(f(\tilde{v}), \phi)=\eee(u, \phi).\]

Through smooth mollifiers, it is easy to construct a sequence of smooth functions $\{f_n\}_{n\in \NN}$ on $\RR$ such that the following conditions hold,
\begin{enumerate}
  \item $f_n (x)=0$ for $x\le c+\frac{1}{n+2}$;
  \item $f_n (x)=x-(c+\frac{1}{n+1})$ for $x\ge c+\frac{1}{n}$;
  \item $f_n (x)>(x-(c+\frac{1}{n+1}))_+$, $f_n'(x)\in (0,1)$,

\noindent and $f_n''(x)>0$ for $x\in (c+\frac{1}{n+2},c+\frac{1}{n})$.
\end{enumerate} 
Note that $\{f_n\}_{n\in \NN}$ is monotone since for any $x\in \RR$,
\[f_{n+1} (x)\ge(x-(c+\frac{1}{n+2}))_+\ge f_n (x).\]

Considering the definition of $\tilde{v}$, we see that for each $n$, $\phi f_n'(\tilde{v})\in \fff_{b,c}$, and
\begin{equation}
\label{eq-supp-phi-f'}
\func{supp} \phi f_n'(\tilde{v})\subseteq G_{\phi}\cap\overline{\Omega_{\alpha}}.
\end{equation}
By Lemma \ref{lem-technical-oy}, we have for each $n\in \NN$,
\begin{align*}
\eee(f_n(\tilde{v}),\phi)&\le\eee(\tilde{v}, f_n'(\tilde{v})\phi)\\
&\overset{(\ref{eq-supp-phi-f'})}{=}\eee(v, f_n'(\tilde{v})\phi)\\
&\overset{(\ref{eq-supp-phi-f'})}{\le}-\delta\int_{\Omega_{\alpha}}f_n'(\tilde{v})\phi d\mu\\ 
&\le -\delta\int_{\Omega_{\alpha-\frac{1}{n}}}\phi d\mu.
\end{align*}
It is direct to see that
\[\lim_{n\rightarrow\infty}\int_{\Omega_{\alpha-\frac{1}{n}}}\phi d\mu=\int_{\Omega_{\alpha}}\phi d\mu.\]

We are left to prove that
\[\lim_{n\rightarrow\infty}\eee(f_n(\tilde{v}),\phi)=\eee(f(\tilde{v}),\phi),\]
while the latter is equal to $\eee(u,\phi)$.
This follows from the observation that there exists a sequence of normal contractions $\{g_n\}_{n\in \NN}$ on $\RR$ such that $f_n= g_n(f)$ and
$g_n\rightarrow \func{Id}$ uniformly.
A standard argument (cf. the proof of Theorem 1.4.2 (iii) in \cite{FOT}) gives that $f_n(\tilde{v})\rightarrow f(\tilde{v})$ with $\eee_1$-norm as $n\rightarrow \infty$, and this finishes the proof.
\end{proof}


\section{Construction of the metric graph}\label{sect-metric-graph}
Let $\lp V, \omega, \mu\rp$ be a simple weighted graph and $d$ an adapted metric on $V$ such that all metric balls in $(V, d)$ are finite. 

Now we construct a metric graph using the data $\g$ and $d$. 
We start from the graph structure $\ve$ of $\g$. Following the structure of a metric graph as in subsection 0.5, we fix 
the functions $l, p, q$ by
 \[l(e)=d(x,y), p(e)=q(e)=\omega(x,y)d(x,y),\] 
for $e=(x,y)\in E_+$. 

To avoid confusion, from now on, we denote the measure on $X$ constructed using $q$ by $\hat{\mu}$. Denote the natural quotient metric on $X$ by $d_{l}$ as before. The following type comparison lemma is due to Folz \cite{FOLZSC}. We include a proof here for completeness.
\begin{lem}
  \label{lem-comparison}
  We have the following comparisons:
  \begin{enumerate}
\item  for each pair of points $x,y\in V$, we have $d(x,y)\le d_{l}(x,y)$, where for $d_{l}(x,y)$, $x,y$ are viewed as points in $X$;
\item for all $r>0$, $x_0\in V$, we have
\[\hat{\mu}\lp B_{d_{l}}^{X}(x_0, r)\rp \le \mu\lp B_{d}^{V}(x_0, r)\rp,\]
where $B_{d_{l}}^{X}(x_0, r)$ is understood as a subset of $(X, d_{l})$, and $B_{d}^{V}(x_0, r)$ is a subset of $V$.
  \end{enumerate}
\end{lem}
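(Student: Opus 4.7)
\medskip

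\noindent\emph{Proof proposal.} My plan for part (1) is to use the fact that a minimizing path in the metric graph between two vertices can always be chosen to traverse only full edges. Concretely, a continuous path in $X$ from $x$ to $y$ (both in $V$) can be partitioned according to the edges it passes through, and any sub-arc that enters an edge but exits on the same side without reaching the opposite vertex can be deleted without increasing length. So the infimum defining $d_l(x,y)$ is realized by a sequence of whole edges corresponding to a graph-theoretic path $x=x_0,x_1,\dots,x_n=y$ with $x_i\sim x_{i+1}$, and its length is $\sum_{i=0}^{n-1} l(e_i)=\sum_{i=0}^{n-1} d(x_i,x_{i+1})$. The triangle inequality in $(V,d)$ yields $\sum_{i=0}^{n-1} d(x_i,x_{i+1})\ge d(x_0,x_n)=d(x,y)$, giving $d(x,y)\le d_l(x,y)$.

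For part (2), I would first show the crucial covering statement: every point $p\in B_{d_l}^X(x_0,r)$ lies on an edge $I(e)$ with $e=(x,y)\in E_+$ such that at least one of $x,y$ belongs to $B_d^V(x_0,r)$. To see this, write $t\in[0,l(e)]$ for the parameter of $p$ on $I(e)$. Since the shortest path from $x_0$ to $p$ must enter $I(e)$ through one of its endpoints, one has
\[
d_l(x_0,p)=\min\bigl(d_l(x_0,x)+t,\; d_l(x_0,y)+l(e)-t\bigr),
\]
so at least one of $d_l(x_0,x)$ or $d_l(x_0,y)$ is bounded by $d_l(x_0,p)\le r$. Invoking part (1) then puts that endpoint into $B_d^V(x_0,r)$.

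Given the covering, the measure estimate follows by bookkeeping:
\[
\hat\mu\bigl(B_{d_l}^X(x_0,r)\bigr)\le\!\!\sum_{\substack{e=(x,y)\in E_+\\ \{x,y\}\cap B_d^V(x_0,r)\ne\emptyset}}\!\!\hat\mu(I(e))=\!\!\sum_{\substack{e=(x,y)\in E_+\\ \{x,y\}\cap B_d^V(x_0,r)\ne\emptyset}}\!\!\omega(x,y)\,d(x,y)^2,
\]
using $\hat\mu(I(e))=q(e)\,l(e)=\omega(x,y)\,d(x,y)^2$. Since each edge in this sum has at least one endpoint in $B_d^V(x_0,r)$, the sum is dominated (with at most a factor of double counting that only helps the inequality) by
\[
\sum_{z\in B_d^V(x_0,r)}\sum_{y\sim z}\omega(z,y)\,d(z,y)^2.
\]
Because $d$ is an adapted metric, $\omega(z,y)>0$ forces $d(z,y)\le c_0$, hence $d(z,y)\wedge c_0=d(z,y)$, and the adaptedness inequality gives $\sum_{y\sim z}\omega(z,y)\,d(z,y)^2\le \mu(z)$. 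Summing over $z$ yields the desired bound $\mu(B_d^V(x_0,r))$.

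I expect no genuine obstacle: the analytic content is just the adaptedness inequality combined with the geometric observation on minimizing paths. The step requiring the most care is verifying that the infimum in the definition of $d_l(x,y)$ for vertices is realized by a concatenation of full edges (so that its value coincides with the weighted shortest-path distance on $(V,E)$); Assumption 0.4 ensures only finitely many edges are relevant in any ball, which makes this routine. From there, the covering argument and the adaptedness estimate give part (2) directly.
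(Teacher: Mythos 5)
Your argument is correct and is essentially the paper's own proof: part (1) is the identification of $d_l$ restricted to $V$ with the infimum of $\sum_i d(x_i,x_{i+1})$ over graph paths plus the triangle inequality, and part (2) bounds $\hat\mu$ of the ball by summing $\hat\mu(I(e))=\omega(x,y)d^2(x,y)$ over edges having an endpoint in $B^V_d(x_0,r)$ and then applies the adaptedness bound $\sum_{y\sim x}\omega(x,y)d^2(x,y)\le\mu(x)$. The only difference is that you spell out explicitly the covering step (every point of the metric-graph ball lies on an edge with an endpoint in the ball), which the paper leaves implicit in its chain of inequalities.
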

\begin{proof}
  (1) For distinct $x, y\in V\subset X$, by definition of $d_{l}$ as the quotient metric, we have
  \[d_{l} (x,y)=\inf \{\sum_{i=0}^{n-1}d(x_i, x_{i+1}): \{x_i\}_{i=0}^{n} \text{~~is a path connecting~~}x,y\}.\]
  By triangle inequality for $d$, the assertion follows.

  (2) Note that for $e=(x,y)\in E_+$, we have $\hat{\mu}(I(e))=q(e)l(e)=\omega(x,y)d^2(x,y)$. 

  For each $x\in V\subset X$, the total measure of the set $E_x$ of marked intervals with $x$ as a vertex, can be calculated as
  \[\sum_{e\in E_x}\hat{\mu}(I(e))\le \sum_{y, y\sim x}\omega(x,y)d^2(x,y)\le \mu(x).\]
  The second inequality follows from the adapted-ness condition (\ref{eq-adapted-condition}).

Fix some $r>0$ and $x_0\in V$. By (1), we have $B_{d_{l}}^{X}(x_0, r)\cap V\subseteq B_{d}^{V}(x_0, r)$. Let $A\subset E_+$ be such that the corresponding marked intervals have nonempty intersection with $B_{d_{l}}^{X}(x_0, r)$.
   Then we see that
   \[\hat{\mu}\lp B_{d_{l}}^{X}(x_0, r)\rp\le \sum_{e\in A}\hat{\mu}(I(e))\le\sum_{x\in B_{d_{l}}^{X}(x_0, r)\cap V}\sum_{e\in E_x}\hat{\mu}(I(e))\le \mu\lp B_{d}^{V}(x_0, r)\rp.\]

\end{proof}
\begin{rem}
  \rm{In general, we can not expect that $d$ and $d_{l}$ coincide on $V$, as $d$ is not assumed to be a shortest path metric.
  }
\end{rem}

\section{Volume growth criterion}\label{sect-proof-vol}
The proof of Theorem \ref{thm-Folz} is a combination of Lemma \ref{lem-comparison} and Proposition \ref{prop-comparison}.
Let $\g$ be a simple weighted graph  with an adapted metric $d$, and $X$ be the corresponding metric graph as constructed in Section \ref{sect-metric-graph} (with metric $d_{l}$ and measure $\hat{\mu}$). By Lemma \ref{lem-comparison}, the metric $d_{l}\ge d$ when restricted on $V$. In Theorem \ref{thm-Folz}, we assume the finiteness of all metric balls in $(V, d)$, whence metric balls in $(V, d_{l})$ are finite as well. So the results about metric graphs in subsection 0.5 apply.
In particular, we need that $d_{l}=\rho$, the intrinsic metric on $X$. The basic Assumption \ref{assm-basic} of Sturm is then fulfilled.

In view of the volume growth comparison in Lemma \ref{lem-comparison}, the volume growth of the metric graph $X$ satisfies
   \[\int^{\infty}\frac{rdr}{\log \lp\hat{\mu}\lp B_{d}(x_0, r)\rp\rp}=\infty,\]
   for any $x_0\in V$ if so does the weighted graph $\g$. Sturm's volume growth criterion shows stochastic completeness of $X$. Then the stochastic completeness of $\g$ follows from Proposition \ref{prop-comparison}.

In this section, we give our first analytic proof of Proposition \ref{prop-comparison}.

\begin{proof}[Proof of Proposition \ref{prop-comparison}]

We assume that $\g$ is stochastically incomplete. By Theorem \ref{thm-weak-OY-graph}, there exists a non-negative, nonzero, bounded function $u$ on $V$ such that
\[\Delta u (x)=\lap(u(x)-u(y))\le-1,\] 
for all $x\in \Omega_{1}=\{y\in V: u(y)>u^*-1\}$. 
Without loss of generality, we can also assume that $u^*=\sup_V u=2$.


  Claim: The metric graph $X$ with the Dirichlet form $(\eee, \fff)$ as constructed in subsection 0.5 is stochastically incomplete.

   It suffices to construct a WOYMP-violating function $v$ on $X$ according to Proposition \ref{prop-weak-OY}. It is constructed in several steps:
   \begin{enumerate}
     \item for any $x\in V$, we let $v(x)=u(x)$;
     \item for an edge $e=(x,y)\in E_+$ such that $e$ has at least one vertex in $\Omega_{1}$, solve the simple equation
     $v''=1$ on $(0, l(e))$ with boundary condition $v(0)=u(x), v(l(e))=u(y)$;
     \item for an edge $e=(x,y)\in E_+$ that has no vertex in $\Omega_{1}$, we define $v$ to be linear on $[0, l(e)]$ with
     $v(0)=u(x), v(l(e))=u(y)$.
   \end{enumerate}
We list the solution to the simple boundary value problem on a marked interval $I(e)$:
\[v(t)=\frac{1}{2}t^2+ \lp \frac{v(l(e))-v(0)}{l(e)}-\frac{1}{2}l(e)\rp t+ v(0).\]
What we really need is the derivatives of $v$ at the boundary points:
\[v'(0)=\frac{v(l(e))-v(0)}{l(e)}-\frac{1}{2}l(e), v'(l(e))=\frac{v(l(e))-v(0)}{l(e)}+\frac{1}{2}l(e).\]
Another simple observation is that the maximum of $v\vert_{I(e)}$ is always achieved at the boundary points for each $e\in E_+$.
Note that we are solving a Dirichlet to Neumann problem, although a simple one.

It is easy to see that the function $v$ is locally Lipschitz and bounded with $v^*=\sup_X v=\sup_V u=2$, thus in $C_{\func{Lip}, \func{loc}} (X)\cap L^{\infty}\subseteq \fff_{b, \func{loc}}$. Denote \[\hat{\Omega}_{1}=\{x\in X: v(x)>v^*-1=1\}.\]
Note that $\hat{\Omega}_{1}\cap V=\Omega_{1}$.

   Fix an arbitrary non-negative $\phi\in \fff_{b, c}$ with $\func{supp}\phi \subset \overline{\hat{\Omega}_{1}}$. We always choose the continuous representative of $\phi$, whence $\phi =0$ outside $\hat{\Omega}_{1}$. 
 Let $A\subset E_+$ be such that the corresponding marked intervals have nonempty intersection with $\hat{\Omega}_{1}$. We have that
   \begin{align}
\nonumber\eee(v, \phi)&=\sum_{e\in A}p(e)\int_{I(e)} v' \phi' dm(e)\\
\nonumber&=\sum_{e\in A}p(e)\int_{0}^{l(e)}  \lp \int_{0}^{t} v''(s)ds+ v\vert_{I(e)}'(0)\rp\phi'(t) dt\\
\nonumber&=\sum_{e\in A}p(e)\int_{0}^{l(e)}  v''(s) \lp \int_{s}^{l(e)} \phi'(t)dt\rp ds
+\sum_{e\in A}p(e) v\vert_{I(e)}'(0)(\phi(l(e))-\phi(0))\\
&=-\sum_{e\in A}p(e)\int_{0}^{l(e)}  v''(s) \phi(s) ds\\&+\sum_{e\in A}p(e) \lp v\vert_{I(e)}'(l(e))\phi(l(e))- v\vert_{I(e)}'(0)\phi(0)\rp.
\end{align}
The term in (3.1) equals to \[-\int_{\hat{\Omega}_{1}}\phi d\hat{\mu},\]
noting that $v''=1$ on the edges in $A$, and $p=q$.
The term in (3.2) can be rewritten as a sum over vertices since
\begin{align}\label{eq-edge-term}
&v\vert_{I(e)}'(l(e))\phi(l(e))- v\vert_{I(e)}'(0)\phi(0)\\ \nonumber =&\lp \frac{u(t(e))-u(s(e))}{l(e)}+\frac{1}{2}l(e) \rp\phi(t(e))\\\nonumber
-&\lp \frac{u(t(e))-u(s(e))}{l(e)} -\frac{1}{2}l(e)\rp\phi(s(e)).
\end{align}
It is direct to see that for each $x$ with either $(x,y)\in A$ or $(y,x)\in A$, the contribution of (\ref{eq-edge-term}) at $x$ is
\[\lp \frac{u(x)-u(y)}{d(x,y)} +\frac{1}{2}d(x,y)\rp\phi(x).\]

Noting that $\phi=0$ outside $\hat{\Omega}_{1}$, we obtain
\begin{align*}
&\sum_{e\in A}p(e) \lp v\vert_{I(e)}'(l(e))\phi(l(e))- v\vert_{I(e)}'(0)\phi(0)\rp\\
=&\sum_{x\in \Omega_{1}} \sum_{y, y\sim x}\omega(x,y)d(x,y)\lp \frac{u(x)-u(y)}{d(x,y)} +\frac{1}{2}d(x,y)\rp\phi(x)\\
=&\sum_{x\in \Omega_{1}} \phi(x)\lp\sum_{y, y\sim x}\omega(x,y)(u(x)-u(y)) +\frac{1}{2}\sum_{y, y\sim x}\omega(x,y)d^2(x,y)\rp\\
\le&\sum_{x\in \Omega_{1}} \phi(x)\mu(x)\lp -1+\frac{1}{2} \rp\le0.
\end{align*}

In summary, we have that
\[\eee(v, \phi)\le-\int_{\hat{\Omega}_{1}}\phi d\hat{\mu}.\]
The claim follows.

\end{proof}
\begin{rem}\rm{ From the proof we can see the flexibility of Proposition \ref{prop-weak-OY}. It is easy to work with, thanks to the weak formulation.

}
\end{rem}

\section{Another proof of Proposition \ref{prop-comparison}}\label{sect-another-proof}
In this section, we give another proof of Proposition \ref{prop-comparison}. As before, $(V, \omega,\mu)$ is a simple weighted graph with an adapted metric $d$ as in Definition \ref{defi-adapted-distance-new}. 

An immediate consequence of Theorem \ref{thm-weak-OY-graph} is the following stability type result for a weighted graph.
\begin{lem}
  \label{lem-change-mu} Let $\g$ be a simple weighted graph with an adapted metric $d$. Assume that $\g$ is stochastically incomplete.
  Let $\nu: V\rightarrow (0,\infty)$ be such that $\nu(x)\le \mu(x)$ for each $x\in V$.
  Consider the weighted graph $(V, \omega,\nu)$. We have that $(V, \omega,\nu)$ is stochastically incomplete.
 A special case is that we define $\nu$ by $\nu(x)=\sum_{y}\omega(x,y)d^2(x,y)$.
\end{lem}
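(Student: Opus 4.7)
The plan is to leverage Theorem \ref{thm-weak-OY-graph} (the weak Omori-Yau maximum principle characterization) in both directions, combined with the simple observation that changing the vertex measure from $\mu$ to $\nu$ only rescales the formal Laplacian pointwise by a positive factor. Concretely, if $\Delta_\mu$ and $\Delta_\nu$ denote the formal Laplacians of $(V,\omega,\mu)$ and $(V,\omega,\nu)$ respectively, then for every function $u$ on $V$,
\[
\Delta_\nu u(x) \;=\; \frac{\mu(x)}{\nu(x)}\, \Delta_\mu u(x).
\]
This identity is the one-line core of the proof.

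First I would apply Theorem \ref{thm-weak-OY-graph} to the stochastically incomplete graph $\g$ to obtain a non-negative bounded WOYMP-violating function $u$ on $V$, together with $\alpha>0$ such that $\Delta_\mu u(x) \le -\alpha$ for every $x$ in $\Omega_\alpha = \{y\in V: u(y) > u^* - \alpha\}$. Since $\Omega_\alpha$ and $u^*$ are defined purely in terms of the values of $u$ (independent of the measure), the set $\Omega_\alpha$ is unchanged when we switch from $\mu$ to $\nu$.

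Next, using the displayed identity and the hypothesis $\nu(x) \le \mu(x)$, for every $x\in\Omega_\alpha$ we have $\mu(x)/\nu(x) \ge 1$, and $\Delta_\mu u(x) \le -\alpha < 0$ implies
\[
\Delta_\nu u(x) \;=\; \frac{\mu(x)}{\nu(x)}\,\Delta_\mu u(x) \;\le\; -\alpha\,\frac{\mu(x)}{\nu(x)} \;\le\; -\alpha.
\]
Hence the same function $u$ with the same constant $\alpha$ witnesses that $u$ is a WOYMP-violating function on $(V,\omega,\nu)$. Applying the converse direction of Theorem \ref{thm-weak-OY-graph} then yields stochastic incompleteness of $(V,\omega,\nu)$.

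For the stated special case $\nu(x) = \sum_y \omega(x,y) d^2(x,y)$, the only thing to verify is that $\nu(x)\le\mu(x)$. Because $d$ is an adapted metric (not merely weakly adapted), $\omega(x,y) > 0$ implies $d(x,y) \le c_0$, so $(d(x,y)\wedge c_0)^2 = d(x,y)^2$ and condition (\ref{eq-adapted-condition}) gives exactly $\sum_y \omega(x,y) d^2(x,y) \le \mu(x)$. There is no real obstacle here; the argument is essentially a one-line scaling computation, and the only thing one must be careful about is that the set $\Omega_\alpha$ and the constant $\alpha$ depend only on $u$ (not on the measure), so reusing $u$ as the WOYMP-violating function for the new graph is unambiguous.
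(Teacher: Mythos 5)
Your proof is correct and follows essentially the same route as the paper: take the WOYMP-violating function $u$ for $(V,\omega,\mu)$ from Theorem \ref{thm-weak-OY-graph}, use the pointwise rescaling $\Delta_\nu u = \frac{\mu}{\nu}\Delta_\mu u \le -\alpha$ on the measure-independent set $\Omega_\alpha$, and apply the theorem again to conclude incompleteness of $(V,\omega,\nu)$. Your explicit check of $\nu\le\mu$ in the special case via the adaptedness condition is a small detail the paper leaves implicit, but nothing more.
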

\begin{proof}
  By Theorem \ref{thm-weak-OY-graph}, we let $u$ be a non-negative WOYMP-violating function on $\g$.
  Denote the formal Laplacian on $(V, \omega,\nu)$ by $\Delta'$. For each $x\in \Omega_{\alpha}$,
  we have
  \[\Delta' u(x)=\frac{1}{\nu(x)}\sum_{y}\omega(x,y)\lp u(x)-u(y)\rp=\frac{\mu(x)}{\nu(x)}\Delta u(x)\le -\frac{\mu(x)}{\nu(x)}\alpha\le-\alpha.\]
  Thus $u$ is also a WOYMP-violating function on $(V, \omega,\nu)$.
\end{proof}
\begin{rem}\label{rem-special-mu}
  \rm{Note that $d$ remains an adapted metric on $(V, \omega,\nu)$, and the volume growth in $(V, d,\nu)$ is smaller than that of $(V, d,\mu)$. Therefore, by the above lemma, for the volume growth criterion of stochastic completeness for a weighted graph $\g$,
  we can focus our attention on the special case $\mu(x)=\sum_{y}\omega(x,y)d^2(x,y)$ for each $x\in V$. 
  }
\end{rem}
\begin{proof}
  [Another proof of Proposition \ref{prop-comparison}]
  Let $\g$ be a simple weighted graph with an adapted metric $d$. By Lemma \ref{lem-change-mu} and Remark \ref{rem-special-mu}, we can assume that $\mu(x)=\sum_{y}\omega(x,y)d^2(x,y)$ for each $x\in V$ without loss of generality. Let $X$ be the corresponding metric graph with metric $d_{l}$ and measure $\hat{\mu}$ as in Section \ref{sect-metric-graph}. To avoid confusion, in this section, we denote the minimal Dirichlet form on the weighted graph by $(\eee, \fff)$, and the one on $X$ by $(\hat{\eee}, \hat{\fff})$. Similar notations apply for a quantity on the weighted graph with a corresponding one on the associated metric graph.


  Let $\{\hat{v}_n\}\subset \hat{\fff}$ be a sequence of functions
  satisfying
\[0\le \hat{v}_n\le 1, \lim_{n\rightarrow \infty}\hat{v}_n =1 ~~~~\hat{\mu}\text{-a.e.}\] 
such that
\[\lim_{n\rightarrow \infty}\hat{\eee}(\hat{v}_n, \hat{w})=0\]
holds for any $\hat{w}\in \hat{\fff}\cap L^1(X, \hat{\mu})$,
  as given by Theorem \ref{thm-fot}. We always work with the unique continuous representative for each $\hat{v}_n$. By continuity, we see that $0\le \hat{v}_n\le 1$ on $X$ for each $n$.
  Define a sequence of functions $\{v_n\}$ on $V$ by $v_n= \hat{v}_n\vert_V$ for each $n$. It is direct to see that $0\le v_n\le 1$ on $V$ for each $n$ by continuity of $\hat{v}_n$.

  Claim 1: The sequence $\{v_n\}\subset \fff$.

  By the essential self-adjointnees result in \cite{HKMW} (Theorem 1), $\fff=\fff_{\max}$. Hence it suffices to show that for each $n$ 
  \[\sum_{x\in V}v_n^2(x)\mu(x)+\frac{1}{2}\sum_{x\in V}\sum_{y\in V}\omega(x,y)\lp v_n(x)-v_n(y)\rp^2<\infty.\]

We need the following Sobolev embedding $W^{1,2}((0, l))\subset C([0,l])$ with optimal constant (\cite{Rich-sobolev}, \cite{WKNTY}), 
\[\lp\sup_{t\in [0, l]}\vert u(t) \vert\rp^2\le \coth (l) \int_{0}^{l}\lp u^2(t)+(u'(t))^2 \rp dt.\]
It follows that for each marked interval $I(e)$ in $X$ with $e=(x,y)\in E_+$, for each $u\in W^{1,2}(X)\cap C(X)$,
\begin{equation}
\label{eq-sup-sob}
\parallel u\vert_{I(e)} \parallel_{\sup}^2\le \frac{\coth \lp d(x,y)\rp}{\omega(x,y)d(x,y)}\parallel  u\vert_{I(e)}\parallel_{W^{1,2}(I(e))}^2.
\end{equation}
Then we have that for each $u\in W^{1,2}(X)\cap C(X)$,
\begin{align*}
\sum_{x\in V}u^2(x)\mu(x)&=\sum_{e= (x,y)\in E_+} \omega(x,y)d^2(x,y)\lp u^2(x)+ u^2(y) \rp\\
&\le2\sum_{e= (x,y)\in E_+}d(x,y) \coth \lp d(x,y)\rp\parallel  u\parallel_{W^{1,2}(I(e))}^2\\
&\le C \parallel u \parallel_{W^{1,2}(X)}^2,
\end{align*}
where $C=2\sup_{t\in (0,c_0]}t\coth(t)>0$ is a constant.
And for each $u\in W^{1,2}(X)\cap C(X)$,
\begin{align*}
\frac{1}{2}\sum_{x\in V}\sum_{y\in V}\omega(x,y)\lp u(x)-u(y)\rp^2&=\sum_{e= (x,y)\in E_+}\omega(x,y)\lp \int_{0}^{l(e)}u'(t)dt\rp^2\\
&\le\sum_{e= (x,y)\in E_+}\omega(x,y)d(x,y)\lp \int_{0}^{l(e)}\lp u'(t)\rp^2 dt\rp\\
&\le \parallel u \parallel_{W^{1,2}(X)}^2.
\end{align*}
Claim 1 follows.

Claim 2: For each $x\in V$, $\lim_{n\rightarrow \infty}v_n(x)=1$.

Fix $x\in V$ and recall that $E_x\subset E_+$ is the set of marked intervals with a vertex being $x$. By assumption, since $\lim_{n\rightarrow\infty} \hat{v}_n=1$, $\hat{\mu}$-a.e., we can choose some $y_e\in (0, l(e))$ for each $e\in E_x$ such that $\lim_{n\rightarrow\infty} \hat{v}_n (y_e)=1$ for each $e$.
Define $\hat{w}$ on $\cup_{e\in E_x} I(e)$ by $\hat{w}(y)=\frac{1}{d(x,y_e)}(d(x,y_e)-d(x,y))_+$  for $y\in I(e)$ and extend it by $0$ outside. It is easy to see that $\hat{w}$ is compactly supported, Lipschitz and thus $\hat{w}\in L^1(X, \hat{\mu})\cap\hat{\fff}$.
Then we have that
\begin{align*}
0=\lim_{n\rightarrow\infty}\hat{\eee}(\hat{v}_n, \hat{w})&=\lim_{n\rightarrow\infty}\sum_{e\in E_x}\omega(e)l(e)\int_{I(e)}\hat{v}_n'(t)\hat{w}'(t) dt\\
&=\lim_{n\rightarrow\infty}\sum_{e\in E_x}\omega(e)l(e)\frac{1}{d(x,y_e)}(\hat{v}_n(x)-\hat{v}_n(y_e)),
\end{align*}
whence $\lim_{n\rightarrow \infty}v_n(x)=\lim_{n\rightarrow \infty}\hat{v}_n(x)=1$, as all sums in the above have only finitely many terms.
Claim 2 follows.

Now for any function $w$ on $V$ with $w\in L^1(V, \mu)\cap \fff$, we can extend it to a function $\hat{w}$ on $X$ by linear interpolation with $\hat{w}\vert_V=w$.

Claim 3: The function $\hat{w}\in L^1(X, \hat{\mu})\cap\hat\fff$.

Let $\{w_n\}\subset C_c(V)$ be a sequence converging to $w$ in the $\eee_1$ norm. Let $\hat{w}_n$ be the extension of $w_n$ by linear interpolation in the same way as $\hat{w}$.

On each edge $I(e)$ with $e=(x,y)\in E_+$, we have that $\hat{w}$ (as well as $\hat{w}_n$) is a linear function. Thus for each $e=(x,y)\in E_+$, we have
\[\omega(x,y)d(x,y)\int_{I(e)}\lp \hat{w}'(t)\rp^2 dt=\omega(x,y)\lp \hat{w}(x)-\hat{w}(y)\rp^2,\]
and
\begin{align*}
\omega(x,y)d(x,y)\int_{I(e)}\hat{w}^2(t) dt&=\frac{1}{3}\omega(x,y)d^2(x,y)\lp w^2(x)+w(x)w(y)+w^2(y)\rp\\
&\le \frac{1}{2}\omega(x,y)d^2(x,y)\lp w^2(x)+w^2(y)\rp,
\end{align*}
whence $\hat{\eee}_1(\hat{w},\hat{w})\le\eee_1(w, w)$. The same estimates clearly hold with $w, \hat{w}$ replaced by $w-w_n, \hat{w}-\hat{w}_n$ respectively for each $n$, that is, $\hat{\eee}_1(\hat{w}-\hat{w}_n,\hat{w}-\hat{w}_n)\le\eee_1(w-w_n, w-w_n)$ for each $n$. It follows that $\hat{w}_n\rightarrow \hat{w}$ with respect to $\hat{\eee}_1$-norm and $\hat{w}\in \hat{\fff}$.

To show that $\hat{w}\in L^1(X, \hat{\mu})$, we need another elementary calculation for each $e=(x,y)\in E_+$:
\[\omega(x,y)d(x,y)\int_{I(e)}\vert \hat{w}(t)\vert dt=\frac{1}{2}\omega(x,y)d^2(x,y)\lp \vert w(x)\vert +\vert w(y)\vert\rp,\]
by properties of linear functions. It follows that $\parallel \hat{w}\parallel_{L^1(X, \hat{\mu})}=\frac{1}{2}\parallel w\parallel_{L^1(V, \mu)}$.

Now we finish the proof by noting that
\begin{align*}
\eee(u_n, w)&=\sum_{e=(x,y)\in E_+}\omega(x,y)\lp v_n(x)-v_n(y)\rp \lp w(x)-w(y)\rp\\
&=\sum_{e=(x,y)\in E_+}\omega(x,y)d(x,y)\int_{I(e)}\hat{v}_n'(t) \hat{w}'(t)dt\\&=\hat{\eee}(\hat{v}_n, \hat{w})\rightarrow 0,
\end{align*}
as $n\rightarrow\infty$.
\end{proof}

\section*{Acknowledgement}
The author would like to thank Prof. Grigor'yan for stimulating discussions. The proof of Lemma \ref{lem-technical-oy} is suggested by him. We are also grateful to Matthias Keller, Jun Masamune, Marcel Schmidt and Radek Wojciechowski for helpful comments. 

Part of this work was done when the author was visiting the group of Prof. Lenz at Friedrich-Schiller-Universit\"{a}t Jena and Research group J\"{u}rgen Jost at the Max Planck Institute for Mathematics in the Sciences (MIS) in Leipzig. We would like to thank both institutions for their hospitality.

Special thanks to Sebastian Haeseler, who explained many questions about metric graphs to the author.

\bibliographystyle{amsplain}
\bibliography{ref}

\end{document}